\newcommand{\deleted}[1]{}
\newcommand{\delete}[1]{}
\newcommand{\mynotes}[1]{}
\newcommand\notes[1]{}
\renewcommand\arraystretch{1.5}
\newcommand\changed[1]{#1}
\newtheorem{theorem}{Theorem}[section]
\newtheorem{lemma}[theorem]{Lemma}
\newtheorem{coro}[theorem]{Corollary}
\newtheorem{prop}[theorem]{Proposition}
\theoremstyle{definition}
\newtheorem{defn}[theorem]{Definition}
\newtheorem{remark}[theorem]{Remark}
\newtheorem{exam}[theorem]{Example}
\newtheorem{prop-def}{Proposition-Definition}[section]
\newtheorem{coro-def}{Corollary-Definition}[section]
\newcommand\yy[2]{\begin{scope}[scale=1/2^#1]
\draw (#2,0)--+(1/2,-1/2)--+(1,0);
\end{scope}}
\newcommand\YY[2][]{%
\tikz[line width=0.18ex,scale=0.75,baseline=-3ex,inner sep=1pt,#1]{
\draw (0,0)--+(1/2,-1/2)--+(1,0) (1/2,-1/2)--+(0,-1/2); #2}}
\newcommand{\tquatredeuxa}{\begin{picture}(15,18)(-5,-1)
\put(3,0){\circle*{2}}
\put(-0.2,0.2){$\vee$}
\put(6,7){\circle*{2}}
\put(0,7){\circle*{2}}
\put(0,14){\circle*{2}}
\put(0,7){\line(0,1){7}}
\put(-3,4){\line(1,0){7}}
\put(5,-2){\tiny $\alpha$}
\put(9,5){\tiny $\gamma$}
\put(-6,5){\tiny $\beta$}
\put(-6,12){\tiny $\delta$}
\end{picture}}
\newcommand{\tddeux}[2]{\begin{picture}(12,5)(0,-1)
\put(3,0){\circle*{2}}
\put(3,0){\line(0,1){5}}
\put(3,5){\circle*{2}}
\put(6,-3){\tiny #1}
\put(6,3){\tiny #2}
\end{picture}}
\newcommand{\nc}{\newcommand}
\nc{\tred}[1]{\textcolor{red}{#1}} \nc{\tblue}[1]{\textcolor{blue}{#1}} \nc{\tgreen}[1]{\textcolor{green}{#1}} \nc{\tpurple}[1]{\textcolor{purple}{#1}} \nc{\btred}[1]{\textcolor{red}{\bf #1}} \nc{\btblue}[1]{\textcolor{blue}{\bf #1}} \nc{\btgreen}[1]{\textcolor{green}{\bf #1}} \nc{\btpurple}[1]{\textcolor{purple}{\bf #1}}
\renewcommand{\Bbb}{\mathbb}
\newcommand{\efootnote}[1]{}
\newcommand\wyscco[1]{}
\renewcommand{\textbf}[1]{}
\nc{\mlabel}[1]{\label{#1}}  % Use this to suppress names
\nc{\mcite}[1]{\cite{#1}}  % Use this to suppress names
\nc{\mref}[1]{\ref{#1}}  % Use this to suppress names
\nc{\mbibitem}[1]{\bibitem{#1}} % Use this to show number
\nc{\mlabel}[1]{\label{#1}{\hfill \hspace{1cm}{\bf{{\ }\hfill(#1)}}}}
\nc{\mcite}[1]{\cite{#1}{{\bf{{\ }(#1)}}}}  % Use this lines to show names
\nc{\mref}[1]{\ref{#1}{{\bf{{\ }(#1)}}}}  % Use this lines to show names
\nc{\mbibitem}[1]{\bibitem[\bf #1]{#1}} % Use this to show name
\renewcommand\geq{\geqslant}
\renewcommand\bar[1]{\overline{#1}}
\renewcommand\tilde[1]{\widetilde{#1}}
\nc\kdot{\bfk}
\nc\simple{simple\xspace}
\nc{\rbw}{\mathfrak{R}} \nc{\brp}{\mathrm{brp}} \nc{\lead}{\mathrm{Lead}} \nc{\Id}{\mathrm{Id}} \nc{\Irr}{\mathrm{Irr}} \nc{\vx}{\sigma} \nc{\vy}{\tau} \nc{\dvx}{\sigma^{(1)}} \nc{\dvy}{\tau^{(1)}} \nc{\done}{\vep} \nc{\citep}[1]{\cite{#1}} \nc{\wt}{\mathrm{wt}} \nc{\bre}[1]{|#1|} \nc{\mapmonoid}{\frakM} \nc{\disjoint}{\frakM'}
\nc{\ncpoly}[1]{\langle #1\rangle}  %for noncommutative polynomials
\nc{\mapm}[1]{\frakM(#1)}
\nc{\diff}[1]{{}^\NC\{ #1 \}} \nc{\disj}[1]{\{{#1}\}'} \nc{\mdisj}[1]{\frakM'(#1)} \nc{\brho}{\bar{\rho}} \nc{\om}{\bar{\frakm}} \nc{\frakn}{\mathfrak n} \nc{\ddeg}[1]{^{(#1)}} \nc{\opset}{X} \nc{\genset}{{Z}} \nc{\NC}{\mathrm{{NC}}} \nc{\leaf}{\mathrm{leaf}} \nc{\twig}{\mathrm{twig}} \nc{\fe}{\mathrm{fl}} \nc{\munderline}[1]{#1} \nc{\bo}{o} \nc{\dep}{\mathrm{dep}} \nc{\ofe}{\mathrm{ofl}} \nc{\dfe}{\mathrm{dfe}} \nc{\fex}{\mathrm{fex}} \nc{\dl}{\mathrm{dlex}} \nc{\db}{\mathrm{db}} \nc{\lex}{\mathrm{lex}} \nc{\clex}{\mathrm{clex}} \nc{\dgp}{\mathrm{dgp}} \nc{\dgx}{\mathrm{dgx}} \nc{\br}{\mathrm{br}} \nc{\obd}{\mathrm{odb}} \nc{\ob}{\mathrm{ob}}
\nc{\loc}{location\xspace}
\nc{\occ}{occurrence\xspace}
\nc{\occs}{occurrences\xspace}
\nc{\pla}{placement\xspace}
\nc{\plas}{placements\xspace}
\nc{\tx}{\tilde{X}}
\nc{\bin}[2]{ (_{\stackrel{\scs{#1}}{\scs{#2}}})}  %binomial coeff
\nc{\binc}[2]{ \left (\!\! \begin{array}{c} \scs{#1}\\
    \scs{#2} \end{array}\!\! \right )}  %binomial coeff
\nc{\bincc}[2]{  \left ( {\scs{#1} \atop
    \vspace{-1cm}\scs{#2}} \right )}  %binomial coeff
\nc{\bs}{\bar{S}} \nc{\cosum}{\sqsubset} \nc{\la}{\longrightarrow} \nc{\rar}{\rightarrow} \nc{\dar}{\downarrow} \nc{\dprod}{**} \nc{\dap}[1]{\downarrow \rlap{$\scriptstyle{#1}$}} \nc{\md}{\mathrm{dth}} \nc{\uap}[1]{\uparrow \rlap{$\scriptstyle{#1}$}} \nc{\defeq}{\stackrel{\rm def}{=}} \nc{\disp}[1]{\displaystyle{#1}} \nc{\dotcup}{\ \displaystyle{\bigcup^\bullet}\ } \nc{\gzeta}{\bar{\zeta}} \nc{\hcm}{\ \hat{,}\ } \nc{\hts}{\hat{\otimes}} \nc{\barot}{{\otimes}} \nc{\free}[1]{{#1}^\ast} \nc{\uni}[1]{\tilde{#1}} \nc{\hcirc}{\hat{\circ}} \nc{\leng}{\ell} \nc{\lleft}{[} \nc{\lright}{]} \nc{\lc}{\lfloor} \nc{\rc}{\rfloor}
\nc{\lb}{[} %left bracket
\nc{\rb}{]} %right bracket
\nc{\curlyl}{\left \{ \begin{array}{c} {} \\ {} \end{array}
    \right.  \!\!\!\!\!\!\!}
\nc{\curlyr}{ \!\!\!\!\!\!\!
    \left. \begin{array}{c} {} \\ {} \end{array}
    \right \} }
\nc{\longmid}{\left | \begin{array}{c} {} \\ {} \end{array}
    \right. \!\!\!\!\!\!\!}
\nc{\onetree}{\bullet} \nc{\ora}[1]{\stackrel{#1}{\rar}}
\nc{\ola}[1]{\stackrel{#1}{\la}}%${\Bbb Z}$
\nc{\ot}{\otimes} \nc{\mot}{{{\boxtimes\,}}} \nc{\otm}{\overline{\boxtimes}} \nc{\sprod}{\bullet} \nc{\scs}[1]{\scriptstyle{#1}} \nc{\mrm}[1]{{\rm #1}} \nc{\msum}{\sum\limits}
\nc{\margin}[1]{\marginpar{\rm #1}}   %{\rm #1}}
\nc{\dirlim}{\displaystyle{\lim_{\longrightarrow}}\,} \nc{\invlim}{\displaystyle{\lim_{\longleftarrow}}\,} \nc{\mvp}{\vspace{0.3cm}} \nc{\tk}{^{(k)}} \nc{\tp}{^\prime} \nc{\ttp}{^{\prime\prime}} \nc{\svp}{\vspace{2cm}} \nc{\vp}{\vspace{8cm}} \nc{\proofbegin}{\noindent{\bf Proof: }}
\nc{\proofend}{$\blacksquare$ \vspace{0.3cm}}
\nc{\modg}[1]{\!<\!\!{#1}\!\!>}
\nc{\intg}[1]{F_C(#1)} \nc{\lmodg}{\!<\!\!} \nc{\rmodg}{\!\!>\!} \nc{\cpi}{\widehat{\Pi}}
\nc{\sha}{{\mbox{\cyr X}}}  %used to be \cyr
\nc{\shap}{{\mbox{\cyrs X}}} %sha as product
\nc{\shpr}{\diamond}    %Shuffle product
\nc{\shp}{\ast} \nc{\shplus}{\shpr^+}
\nc{\shprc}{\shpr_c}    %Cartier's product
\nc{\msh}{\ast} \nc{\zprod}{m_0} \nc{\oprod}{m_1} \nc{\vep}{\varepsilon} \nc{\labs}{\mid\!} \nc{\rabs}{\!\mid}
\nc{\astarrow}{\overset{\raisebox{-2pt}{{\scriptsize $\ast$}}}{\rightarrow}}
\nc{\astlarrow}{\overset{\raisebox{-2pt}{{\scriptsize $\ast$}}}{\longrightarrow}}
\nc{\lastarrow}{\overset{\raisebox{-2pt}{{\scriptsize $\ast$}}}{\leftarrow}}
\nc{\mastarrow}[1]{\overset{\raisebox{-2pt}{{\scriptsize $#1$}}}{\rightarrow}}
\nc{\quvarrow}[3]{#1 \overset{q,u,v}{\longrightarrow}_{#3} #2}
\nc{\quvkto}[1]{f_{#1} \overset{q_{#1}, u_{#1}, v_{#1}}{\longrightarrow}_\phi g_{#1}}
\nc{\tvarrow}[3]{#1 \overset{(t,v)}{\longrightarrow}_{#3} #2}
\nc{\Supp}{{\rm Supp}}
\nc{\mpu}{u^{\ast}}
\nc{\mpv}{v^{\ast}}
\nc{\mpw}{w^{\ast}}
\nc{\mpx}{x^{\ast}}
\nc{\dps}{\dotplus}
\nc{\dth}{d} \nc{\mmbox}[1]{\mbox{\ #1\ }} \nc{\fp}{\mrm{FP}} \nc{\rchar}{\mrm{char}} \nc{\Fil}{\mrm{Fil}} \nc{\Mor}{Mor\xspace} \nc{\gmzvs}{gMZV\xspace} \nc{\gmzv}{gMZV\xspace} \nc{\mzv}{MZV\xspace} \nc{\mzvs}{MZVs\xspace} \nc{\Hom}{\mrm{Hom}} \nc{\id}{\mrm{id}} \nc{\im}{\mrm{im}} \nc{\incl}{\mrm{incl}} \nc{\map}{\mrm{Map}} \nc{\mchar}{\rm char} \nc{\nz}{\rm NZ} \nc{\supp}{\mathrm Supp}
\nc{\mo}{\mathbf o}
\nc{\pl}{\mathfrak{p}}
\nc{\Alg}{\mathbf{Alg}} \nc{\Bax}{\mathbf{Bax}} \nc{\bff}{\mathbf f} \nc{\bfk}{{\bf k}} \nc{\bfone}{{\bf 1}} \nc{\bfx}{\mathbf x} \nc{\bfy}{\mathbf y}
\nc{\base}[1]{\bfone^{\otimes ({#1}+1)}} %{{a_{#1}}}
\nc{\Cat}{\mathbf{Cat}} \delete{}
\nc{\detail}{\marginpar{\bf More detail}
    \noindent{\bf Need more detail!}
    \svp}
\nc{\Int}{\mathbf{Int}} \nc{\Mon}{\mathbf{Mon}}
\nc{\rbtm}{{shuffle }} \nc{\rbto}{{Rota-Baxter }} \nc{\remarks}{\noindent{\bf Remarks: }} \nc{\Rings}{\mathbf{Rings}} \nc{\Sets}{\mathbf{Sets}}
\nc{\vwpt}{{Let $V$ be a free $\bfk$-module with a $\bfk$-basis $W$ and let $\Pi$ be a \simple term-rewriting system on $V$ with respect to $W$.}\xspace}
\nc{\BA}{{\Bbb A}} \nc{\CC}{{\Bbb C}} \nc{\DD}{{\Bbb D}} \nc{\EE}{{\Bbb E}} \nc{\FF}{{\Bbb F}} \nc{\GG}{{\Bbb G}} \nc{\HH}{{\Bbb H}} \nc{\LL}{{\Bbb L}} \nc{\NN}{{\Bbb N}} \nc{\KK}{{\Bbb K}} \nc{\QQ}{{\Bbb Q}} \nc{\RR}{{\Bbb R}} \nc{\TT}{{\Bbb T}} \nc{\VV}{{\Bbb V}} \nc{\ZZ}{{\Bbb Z}}
\nc{\cala}{{\mathcal A}} \nc{\calc}{{\mathcal C}} \nc{\cald}{{\mathcal D}} \nc{\cale}{{\mathcal E}} \nc{\calf}{{\mathcal F}} \nc{\calg}{{\mathcal G}} \nc{\calh}{{\mathcal H}} \nc{\cali}{{\mathcal I}} \nc{\call}{{\mathcal L}} \nc{\calm}{{\mathcal M}} \nc{\caln}{{\mathcal N}} \nc{\calo}{{\mathcal O}} \nc{\calp}{{\mathcal P}} \nc{\calr}{{\mathcal R}} \nc{\cals}{{\mathcal S}} \nc{\calt}{{\mathcal T}} \nc{\calw}{{\mathcal W}}
\nc{\calv}{{\mathcal V}}
\nc{\calk}{{\mathcal K}} \nc{\calx}{{\mathcal X}} \nc{\CA}{\mathcal{A}}
\nc{\fraka}{{\mathfrak a}} \nc{\frakA}{{\mathfrak A}} \nc{\frakb}{{\mathfrak b}} \nc{\frakB}{{\mathfrak B}} \nc{\frakC}{{\mathfrak C}}
\nc{\frakD}{{\mathfrak D}} \nc{\frakH}{{\mathfrak H}} \nc{\frakM}{{\mathfrak M}} \nc{\bfrakM}{\overline{\frakM}} \nc{\frakm}{{\mathfrak m}} \nc{\frkP}{{\mathfrak P}}
\nc{\frakN}{{\mathfrak N}} \nc{\frakp}{{\mathfrak p}} \nc{\fraku}{{\mathfrak u}} \nc{\frakv}{{\mathfrak v}}
\nc{\frakQ}{{\mathfrak Q}}\nc{\frakR}{{\mathfrak R}} \nc{\frakS}{{\mathfrak S}}
\nc{\frakx}{{\mathfrak x}} \nc{\ox}{\bar{\frakx}} \nc{\frakX}{{\mathfrak X}} \nc{\fraky}{{\mathfrak y}}
\nc\dop{\delta}
\nc{\Reduce}{{\rm Red}}
\font\cyr=wncyr10 \font\cyrs=wncyr7
\nc{\redt}[1]{\textcolor{red}{#1}}
\nc{\yi}[1]{\textcolor{red}{Yi:#1}} %%when accepted, removed
\nc{\lio}[1]{}
\nc{\sz}[1]{\textcolor{green}{sz:#1}}
\nc{\szo}[1]{}
\nc{\xing}[1]{\textcolor{purple}{Xing:#1}}
\nc{\ws}[1]{\textcolor{blue}{{#1}}} %%These are removed as of June 15, 2014.
\nc{\wsc}[1]{\textcolor{blue}{ws:#1}} %%comments as of June 15, 2014
\nc{\wsco}[1]{}
\nc{\wsn}[1]{\textcolor{magenta}{#1}} %%These are changes as of June 15, 2014
\nc{\medmid}{{\,~{\tiny \longmid}~\,}}
 \nc{\lbar}[1]{\overline{#1}}
\nc{\sy}[1]{Y_{#1}^{\sigma}(\tilde{X})} \nc{\hlrs}{H_{LR}^\sigma(\tilde{X})}
\nc{\y}[2]{Y_{#1}(#2)} \nc{\vees}{\vee_\sigma}
\nc{\brw}{\frakM(Z)} \nc{\irr}{{\rm Irr}} \nc{\pis}{\Pi_S}
\nc{\term}{term\xspace} \nc{\Term}{Term\xspace}
\nc{\re}[1]{R(#1)} \nc{\sumre}[2]{R^{#1}_{#2}}
\nc{\stars}[2]{#1|_{#2}}
\nc{\nbfk}{\bfk^{\times}} \nc{\revise}[1]{\textcolor{blue}{#1}}
\nc{\ord}{{\rm ord}} \nc{\tpi}{\rightarrow_{\Pi}} \nc{\tpis}{\rightarrow_{\Pi_S}}
\nc{\fix}[1]{\tilde{#1}} \nc{\ars}{term rewriting system } \nc{\proj}{\mathrm{P_m}}
\nc{\topi}{\rightarrow_{\Pi_{S}}} \nc{\adm}{\text {\rm Adm}} \nc{\X}{\Omega}
\nc{\inte}{\text {\rm Int}}
\begin{document}
\title[Hopf algebras of decorated planar binary trees and $\vee$-cocycles]{Hopf algebras of planar binary trees: An operated algebra approach}
\author{Yi Zhang} \address{School of Mathematics and Statistics,
Lanzhou University, Lanzhou, Gansu 730000, P. R. China}
\email{zhangy2016@lzu.edu.cn}

\author{Xing Gao$^{*}$
}
\footnotetext{* Corresponding author.}
\address{School of Mathematics and Statistics,
Key Laboratory of Applied Mathematics and Complex Systems,
Lanzhou University, Lanzhou, Gansu 730000, P. R. China}
\email{gaoxing@lzu.edu.cn}

%
%\author{Yan-Feng Luo  }\address{School of Mathematics and Statistics,
%Key Laboratory of Applied Mathematics and Complex Systems,
%Lanzhou University, Lanzhou, 730000, P.R. China}
%\email{luoyf@lzu.edu.cn}

%========================================================================
\hyphenpenalty=8000
\date{\today}

\begin{abstract}
Parallel to operated algebras built on top of planar rooted trees via the grafting operator $B^+$,
we introduce and study $\vee$-algebras and more generally $\vee_\X$-algebras based on planar binary trees.
Involving an analogy of the Hochschild 1-cocycle condition, cocycle $\vee_\X$-bialgebras (resp.~$\vee_\X$-Hopf algebras)
are also introduced and their free objects are constructed via decorated planar binary trees.
As a special case, the well-known Loday-Ronco Hopf algebra $H_{LR}$ is a free cocycle $\vee$-Hopf algebra.
By means of admissible cuts, a combinatorial description of the coproduct $\Delta_{LR(\X)}$  on decorated planar binary trees is given, as in the Connes-Kreimer Hopf algebra by admissible cuts.
\end{abstract}

\subjclass[2010]{
16W99, %Rings and algebras with additional structure
%16S10 %Rings determined by universal properties
%13P10, %Grobner bases; other bases for ideals and modules
%16S15, %Finite generation, finite presentability, normal forms
%12H05, %Differential algebra
%08A70, %application of universal algebra to computer science
08B20 %free algebras
%16R99 %Rings with polynomial identities\none of the above, but in this section
%06F05,  	%Ordered semigroups and monoids
%20M05%Free semigroups, generators and relations, word problems
16T10 %Bialgebras
16T05  	%Hopf algebras and their applications
16T30  	%Connections with combinatorics
%81T15  	%Perturbative methods of renormalization
}

\keywords{
 Hopf algebras; Operated algebras; Planar binary trees
}

\maketitle
\vspace{-1.2cm}

\tableofcontents

\vspace{-1.3cm}

\allowdisplaybreaks

%========================================================================

\section{Introduction}
\smallskip
The rooted tree is a significant object studied in algebra and combinatorics.
Many algebraic structures have been equipped on rooted trees. One of the most important
examples is the Connes-Kreimer Hopf algebra~\mcite{CK98},  which is employed to deal with a problem
of renormalization in Quantum Field Theory~\mcite{BF10, CF11, CK00, FGK, GPZ1, Kre98}.
Other Hopf algebras have also been constructed on rooted trees in different situations, such as Loday-Ronco~\mcite{LR98},
Grossman-Larson~\mcite{GL89}, Foissy-Holtkamp~\mcite{Foi02,Fois02,Hol03}.
Furthermore other algebraic structures, such as dendriform algebras~\mcite{Lod93}, pre-Lie algebras~\mcite{CL01},
operated algebras~\mcite{Guo09}, and Rota-Baxter algebras~\mcite{ZGG16}, have been established on rooted trees.
Most of these algebraic structures possess certain universal properties. For example, the Connes-Kreimer Hopf algebra of rooted trees
inherits its algebra structure from the initial object in the category of (commutative) algebras with a linear operator~\mcite{Foi02,Moe01}.

As a special case of rooted trees, (rooted) planar binary trees play an indispensable role in the study of combinatorics~\mcite{Sta97}, algebraic operads~\mcite{Cha07, LV03}, associahedrons~\mcite{LR04}, cluster algebras~\mcite{HLT11} and  Hopf algebras~\mcite{AS06, BF03, LR98}. In \mcite{LR98}, Loday and Ronco defined a Hopf algebra $H_{LR}$ (with unity) on planar binary trees, which is a free associative algebra on the trees of the form $|\vee T$,
that is the trees such that the tree born from the root on the left has only one leaf.
The $H_{LR}$ (without unity) is the free dendriform algebra on one generator~\mcite{LR98, LR01}. Later, Brouder and Frabetti~\mcite{BF03} showed that there exists a noncommutative Hopf algebra on planar binary trees which represents the renormalization group of quantum electrodynamics, and the coaction which describes the renormalization procedure. In the algebraic framework of Chapoton~\mcite{Cha04} for Bessel operad, a Hopf operad is constructed on the vector spaces spanned by forests of leaf-labeled binary rooted trees.
Aguiar and Sottile further studied the structure of the Loday-Ronco Hopf algebra by a new basis in~\mcite{AS06}, where the product, coproduct and antipode in terms of this basis were also given.

The concept of an algebra with (one or more) linear operators was introduced by Kurosh~\cite{Kur60}.
Later Guo~\cite{Guo09} constructed the free objects of such algebras in terms of various combinatorial objects,
such as Motzkin paths, rooted forests and bracketed words by the name of $\X$-operated algebras,
where $\X$ is a nonempty set used to index the operators. See also~\mcite{BCQ10,GG17, Gub}.
The Connes-Kreimer Hopf algebra of rooted trees can be viewed as an operated algebra, where the operator is the grafting operation $B^+$.
More generally, the decorated (planar) rooted trees with vertices decorated by a set $\X$,
together with a set of grafting operations $\{B^+_\alpha\mid \alpha\in \X\})$, is an $\X$-operated algebra~\mcite{KP, ZGG16}.
Indeed it is the free $\X$-operated algebra on the empty set or equivalently the initial object in the category of $\X$-operated algebras.

It is well-known that the noncommutative Connes-Kreimer Hopf algebra of planar rooted trees is isomorphic to the Loday-Ronco Hopf algebra of planar binary trees~\mcite{Fois02, Hol03}. Now the former can be treated in the framework of operated algebras~\mcite{ZGG16}. So there should be an analogy of operated alebras on top of planar binary trees, which is introduced and explored in the present paper by the name of $\vee$-algebras or more generally $\vee_\X$-algebras.
Let us emphasize that the binary grafting operation $\vee$ on planar binary trees has subtle difference with the aforementioned grafting operation $B^+$ on rooted trees---the $\vee$ is binary while $B^+$ is unary.
Thanks to these new concepts, the decorated planar binary trees $H_{LR}(\X)$ can viewed as a free cocycle $\vee_\X$-bialgebra and further a free cocycle $\vee_\X$-Hopf algebra on the empty set, involving an analogues of a Hochschild $1$-cocycle condition on planar rooted trees~\mcite{Fo3}.
In particular, the well-known Loday-Ronco Hopf algebra $H_{LR}$ is a free cocycle $\vee$-Hopf algebra.
This new free algebraic structure on planar binary trees validates again that most of algebraic structures on rooted trees have universal properties.

Our second source of inspiration and motivation is the admissible cut on rooted trees which was introduced by Connes and Kreimer~\mcite{CK98}. We adapt from this cut to expose the concept of admissible cut on decorated planar binary trees. Surprisingly, the admissible cuts on decorated planar binary trees,  make it possible to give a combinatorial description of the coproduct on the decorated Loday-Ronco Hopf algebras. We point out that our admissible cut is different from the one introduced by Connes and Kreimer~\mcite{CK98}, see Remark~\mref{remk:admi}.

{\bf Structure of the Paper.} In Section~\mref{sec:hadpbt}, we first recall some results concerning the Hopf algebraic structures on decorated planar binary trees. Motivated by the admissible cut on rooted trees, we introduce the concept of an admissible cut on  decorated planar binary trees. Having this concept in hand, we give a combinatorial description of the coproduct of the decorated Loday-Ronco Hopf algebra (Theorem~\mref{them:comb}).
We end this section by showing that $H_{LR}(\X)$ is a strictly graded coalgebra concerning the coalgebra structure (Theorem~\mref{them:sgc}). In Section~\mref{sec:fch}, viewing the Hopf algebra of decorated planar binary trees in the framework of operated algebras, we build  $\vee$-algebras and more generally $\vee_\X$-algebras (Definition~\mref{defn:veealg}), leading to the notations of (cocycle) $\vee_\X$-bialgebras and $\vee_\X$-Hopf algebras (Definitions~\mref{defn:veehopf0}, \mref{defn:veehopf}), involving a $\vee$-cocycle condition. With the help of these concepts, we first equip the decorated planar binary trees $H_{LR}(\X)$ with a free $\vee_\X$-algebraic structure (Theorem~\mref{thm:fva}). A family of coideals of a $\vee_\X$-bialgebra is also given (Proposition~\mref{pp:coideal}). We then prove respectively that $H_{LR}(\X)$ is the free cocycle $\vee_\X$-bialgebra and free cocycle $\vee_\X$-Hopf algebra on the empty set (Theorem~\mref{thm:conclude}). In particular when $\X$ is a singleton set, we establish respectively the free cocycle $\vee$-bialgebra and free cocycle $\vee$-Hopf algebra structures on the well-known Loday-Ronco Hopf algebra $H_{LR}$ (Corollary~\mref{cor:clr}).

{\bf Convention. } Throughout this paper, let $\bfk$ be a unitary commutative ring which will be the base ring of all  modules, algebras,  coalgebras and bialgebras, as well as linear maps. Algebras are  unitary algebras but not necessary commutative.
For any set $Y$, denote by $\bfk Y$ the free \bfk-module with basis $Y$.

\section{Hopf algebras of decorated planar binary trees}
\mlabel{sec:hadpbt}
In this section, we expose some results and notations concerning
Hopf algebraic structures on decorated planar binary trees, which will be used later.
See~\cite{Cha07, Fois02, Man08, Ron02} for more details.

\subsection{Hopf algebras of decorated planar binary trees}
A ${\bf planar\ tree}$ is an oriented graph draw on a plane, with  a preferred vertex called the $\mathbf{root}$. It is {\bf binary} when any vertex is trivalent (one root and two leaves)~\mcite{LR98}. The root is at the bottom of the tree. For each $n\geq 0$, the set of planar binary trees with $n$ interior
vertices will be denoted by $Y_n$. For instance,
\begin{align*}
Y_0&=\{|\},\ \
Y_1=\left\{ \YY{} \right\},\ \
Y_2=\left\{ \YY{\yy10}, \, \YY{\yy11} \right\},\\
Y_3&=\left\{ \YY{\yy10\yy20}, \, \YY{\yy11\yy23}, \, \YY{\yy20\yy23}, \, \YY{\yy10\yy21}, \, \YY{\yy11\yy22} \right\}.
\end{align*}
Here $|$ stands for the unique tree with one leaf. The number of the set $Y_n$ is given by the Catalan number $\frac{(2n)!}{n!(n+1)!}$~\cite{LR98}.

Let $\X$ be a nonempty set throughout the remainder of the paper. For each $n\geq 0$, let $Y_{n}(\X)$ denote the set of planar binary trees in $Y_n$
with interior vertices decorated by elements of $\X$.
Denote by $$Y_\infty(\X):= \bigsqcup_{n\geq 0}Y_n(\X)\,\text{ and }\,  H_{LR}(\X):= \bfk Y_{\infty}(\X) = \bigoplus_{n\geq 0} \bfk Y_{n}(\X).$$

A planar binary tree $T$ in $Y_n(\X)$ is called an {\bf $n$-decorated planar binary tree} or {\bf $n$-tree} for simplicity.
The $\mathbf{depth}$ $\mathrm{dep}(T)$ of a decorated planar binary tree $T$ is the maximal length of linear chains from the root to the leaves of the tree.
For example,
$$\dep(|)=0\,\text{ and }\, \dep({\YY{\node[scale=0.8] at (0.75,-0.55) {{$\alpha$}};}}) = 1.$$

Let $T\in Y_{m}(\X)$ and  $T'\in Y_{n}(\X)$ be two decorated planar binary trees and $\alpha$ an element in $\X$. The
grafting $\vee_\alpha$ of $T$ and $T'$ on $\alpha$ is the $(n + m + 1)$-decorated planar binary tree $T\vee_\alpha T'\in Y_{m+n+1}(\X)$,
obtained by joining the roots of $T$ and $T'$ and create a new root, which is decorated by $\alpha$.
For any decorated planar binary tree $T\in Y_{n}(\X)$ with $n\geq 1$, there exist unique elements
$T^l\in Y_{k}(\X)$, $T^r\in Y_{n-k-1}(\X)$ and $\alpha\in \X$ such that
\begin{align*}
T=T^l\vee_{\alpha}T^r,
\end{align*}
where $T^l$ and $T^r$ are the left-hand side of $T$ and the right-hand side of $T$, respectively. For instance
\begin{align*}
\YY{\node[scale=0.8] at (0.75,-0.55) {${\beta}$};}\vee_{\alpha}|=\YY{\yy10
\node[scale=0.8] at (0.75,-0.55) {${\alpha}$};
\node[scale=0.8] at (0.05,-0.4) {${\beta}$};
}, \quad
\YY{\node[scale=0.8] at (0.75,-0.55) {${\beta}$};}\vee_{\alpha}\YY{\node[scale=0.8] at (0.75,-0.55) {${\gamma}$};}=
\YY{\yy20\yy23
\node[scale=0.8] at (-0.1,-0.25) {${\beta}$};
\node[scale=0.8] at (1.10,-0.25) {${\gamma}$};
\node[scale=0.8] at (0.75,-0.55) {${\alpha}$};
}.
\end{align*}

A multiplication $\ast$ on $H_{LR}(\X)$ with unit $|$ is given recursively on the sum of depth as~\cite[Sec.~4.3]{Fois02}
\begin{align}
|\ast T :=T\ast |:= T\ \text{\ and\ } T\ast T':=T^l\vee_\alpha(T^r\ast T')+(T\ast T^{'l})\vee_\beta T^{'r},
\mlabel{eq:astttt}
\end{align}
where $T=T^l\vee_\alpha T^r$ and $T'=T^{'l}\vee_\beta T^{'r}$ are in $Y_\infty(\X)$ with $\alpha,\beta\in \X$.
Let us agree to fix the notation $\ast$ to denote the multiplication given in Eq.~(\mref{eq:astttt}) hereafter.

\begin{exam} We have
\begin{align*}
&\YY{\node[scale=0.8] at (0.75,-0.55) {${\alpha}$};}\ast \YY{\node[scale=0.8] at (0.75,-0.55) {${\beta}$};}=\YY{\yy11
\node[scale=0.8] at (0.75,-0.55) {${\alpha}$};
\node[scale=0.8] at (0.95,-0.30) {${\beta}$};
}+
\YY{\yy10
\node[scale=0.8] at (0.75,-0.55) {${\beta}$};
\node[scale=0.8] at (0.04,-0.3) {${\alpha}$};
}, \quad
\YY{\yy10
\node[scale=0.8] at (0.75,-0.55) {${\alpha}$};
\node[scale=0.8] at (0.03,-0.35) {${\beta}$};
}\ast
\YY{\node[scale=0.8] at (0.75,-0.55) {${\gamma}$};}=
\YY{\yy20\yy23
\node[scale=0.8] at (-0.1,-0.25) {${\beta}$};
\node[scale=0.8] at (1.10,-0.25) {${\gamma}$};
\node[scale=0.8] at (0.75,-0.55) {${\alpha}$};
}+
\YY{\yy10\yy20
\node[scale=0.8] at (-0.1,-0.2) {${\beta}$};
\node[scale=0.8] at (0.15,-0.39) {${\alpha}$};
\node[scale=0.8] at (0.75,-0.55) {${\gamma}$};
}, \\
&\YY{\node[scale=0.8] at (0.75,-0.55) {${\gamma}$};}\ast
\YY{\yy10
\node[scale=0.8] at (0.75,-0.55) {${\alpha}$};
\node[scale=0.8] at (0.03,-0.35) {${\beta}$};
}=
\YY{\yy11\yy22
\node[scale=0.8] at (0.75,-0.65) {${\gamma}$};
\node[scale=0.8] at (0.88,-0.35) {${\alpha}$};
\node[scale=0.7] at (0.45,-0.21) {${\beta}$};
}+
\YY{\yy10\yy21
\node[scale=0.8] at (0.75,-0.55) {${\alpha}$};
\node[scale=0.8] at (0.05,-0.4) {${\gamma}$};
\node[scale=0.7] at (0.55,-0.2) {${\beta}$};
}+
\YY{\yy10\yy20
\node[scale=0.8] at (-0.1,-0.2) {${\gamma}$};
\node[scale=0.8] at (0.15,-0.39) {${\beta}$};
\node[scale=0.8] at (0.75,-0.55) {${\alpha}$};
}.
\end{align*}
\end{exam}

In the undecorated case, the description of the coproduct in the Loday-Ronco Hopf algebra $H_{LR}$ was first introduced in~\cite[Proposition 3.3]{LR98}.
In the decorated case, Foissy~\cite[Sec.~4.3]{Fois02} equipped the $\mathbf{k}$-algebra $H_{LR}(\X)$  with a coproduct $\Delta_{LR(\X)}$ described recursively on $\dep(T)$
for basis elements $T\in Y_\infty(\X)$ as
\begin{align}
\Delta_{LR(\X)}(T):=|\otimes|  \,\text{ if } T=|;
\mlabel{eq:dlr11}
\end{align}
and for $T = T^l\vee_{\alpha}T^r$,
\begin{align}
\Delta_{LR(\X)}(T) :=\Delta_{LR(\X)}(T^l\vee_{\alpha}T^r):=(T^l\vee_{\alpha}T^r)\otimes |+(\ast, \vee_\alpha)\bigl(\Delta_{LR(\X)}(T^l)\otimes \Delta_{LR(\X)}(T^r)\bigr),
\mlabel{eq:dlr22}
\end{align}
where $(\ast, \vee_\alpha):=(\ast \otimes \vee_\alpha)\circ {\tau_{23}}$ and $\tau_{23}$ is the permutation of the second and third tensor factors.

\begin{exam}
We have
\begin{align*}
\Delta_{LR(\X)}(\YY{\node[scale=0.8] at (0.75,-0.55) {${\alpha}$};})&=\YY{\node[scale=0.8] at (0.75,-0.55) {${\alpha}$};}\otimes |+|\otimes \YY{\node[scale=0.8] at (0.75,-0.55) {${\alpha}$};},\\
\Delta_{LR(\X)}(\YY{\yy11
\node[scale=0.8] at (0.75,-0.55) {${\alpha}$};
\node[scale=0.8] at (0.95,-0.35) {${\beta}$};
})&=
\YY{\yy11
\node[scale=0.8] at (0.75,-0.55) {${\alpha}$};
\node[scale=0.8] at (0.95,-0.35) {${\beta}$};
}\otimes |+|\otimes
\YY{\yy11
\node[scale=0.8] at (0.75,-0.55) {${\alpha}$};
\node[scale=0.8] at (0.95,-0.35) {${\beta}$};
}+
\YY{\node[scale=0.8] at (0.75,-0.55) {${\beta}$};}\otimes
\YY{\node[scale=0.8] at (0.75,-0.55) {${\alpha}$};}
,\\
\Delta_{LR(\X)}(\YY{\yy10
\node[scale=0.8] at (0.75,-0.55) {${\alpha}$};
\node[scale=0.8] at (0.03,-0.35) {${\beta}$};
})&=
\YY{\yy10
\node[scale=0.8] at (0.75,-0.55) {${\alpha}$};
\node[scale=0.8] at (0.03,-0.35) {${\beta}$};
}\otimes |
+|\otimes
\YY{\yy10
\node[scale=0.8] at (0.75,-0.55) {${\alpha}$};
\node[scale=0.8] at (0.03,-0.35) {${\beta}$};
}+
\YY{\node[scale=0.8] at (0.25,-0.55) {${\beta}$};}\otimes \YY{\node[scale=0.8] at (0.75,-0.55) {${\alpha}$};},\\
\Delta_{LR(\X)}(\YY{\yy10\yy21
\node[scale=0.8] at (0.75,-0.55) {${\alpha}$};
\node[scale=0.8] at (0.05,-0.4) {${\beta}$};
\node[scale=0.7] at (0.53,-0.2) {${\gamma}$};
})&=
\YY{\yy10\yy21
\node[scale=0.8] at (0.75,-0.55) {${\alpha}$};
\node[scale=0.8] at (0.05,-0.4) {${\beta}$};
\node[scale=0.7] at (0.53,-0.2) {${\gamma}$};
}\otimes |+|\otimes
\YY{\yy10\yy21
\node[scale=0.8] at (0.75,-0.55) {${\alpha}$};
\node[scale=0.8] at (0.05,-0.4) {${\beta}$};
\node[scale=0.7] at (0.53,-0.2) {${\gamma}$};
}+
\YY{\yy11
\node[scale=0.8] at (0.25,-0.55) {${\beta}$};
\node[scale=0.8] at (0.95,-0.35) {${\gamma}$};
}\otimes\YY{\node[scale=0.8] at (0.75,-0.55) {${\alpha}$};}
+\YY{\node[scale=0.8] at (0.75,-0.55) {${\gamma}$};}\otimes
\YY{\yy10
\node[scale=0.8] at (0.75,-0.55) {${\alpha}$};
\node[scale=0.8] at (0.03,-0.35) {${\beta}$};
},\\
\Delta_{LR(\X)}(\YY{\yy20\yy23
\node[scale=0.8] at (-0.1,-0.25) {${\beta}$};
\node[scale=0.8] at (1.10,-0.25) {${\gamma}$};
\node[scale=0.8] at (0.75,-0.55) {${\alpha}$};
})&=
\YY{\yy20\yy23
\node[scale=0.8] at (-0.1,-0.25) {${\beta}$};
\node[scale=0.8] at (1.10,-0.25) {${\gamma}$};
\node[scale=0.8] at (0.75,-0.55) {${\alpha}$};
}\otimes |+|\otimes
\YY{\yy20\yy23
\node[scale=0.8] at (-0.1,-0.25) {${\beta}$};
\node[scale=0.8] at (1.10,-0.25) {${\gamma}$};
\node[scale=0.8] at (0.75,-0.55) {${\alpha}$};
}+
(\YY{\yy11
\node[scale=0.8] at (0.25,-0.55) {${\beta}$};
\node[scale=0.8] at (0.95,-0.35) {${\gamma}$};
}+
\YY{\yy10
\node[scale=0.8] at (0.75,-0.55) {${\gamma}$};
\node[scale=0.8] at (0.03,-0.35) {${\beta}$};
})\otimes \YY{\node[scale=0.8] at (0.75,-0.55) {${\alpha}$};}
+\YY{\node[scale=0.8] at (0.25,-0.55) {${\beta}$};}\otimes
\YY{\yy11
\node[scale=0.8] at (0.75,-0.55) {${\alpha}$};
\node[scale=0.8] at (0.95,-0.35) {${\gamma}$};
}+\YY{\node[scale=0.8] at (0.75,-0.55) {${\gamma}$};}\otimes\YY{\yy10
\node[scale=0.8] at (0.75,-0.55) {${\alpha}$};
\node[scale=0.8] at (0.03,-0.35) {${\beta}$};
}.
\end{align*}
\mlabel{ex:del}
\end{exam}

Foissy~\mcite{Fois02} also defined linear maps
\begin{align*}
\varepsilon_{LR(\X)}: H_{LR}(\X)\rightarrow \mathbf{k}, \quad | \mapsto 1_\bfk \,\text{ and }\, T \mapsto 0 \,\text{ for }\, |\neq T\in Y_\infty(\X)
\end{align*}
and
\begin{align*}
|: \bfk \to H_{LR}(\X), \quad 1_\bfk \mapsto |.
\end{align*}

Recall~\cite{Man08} that a bialgebra $(H, \ast_H, 1_H, \Delta, \varepsilon)$ is called $\mathbf{graded}$ if there are $\mathbf{k}$-submodules $H^{(n)}, n\geq 0$, of $H$ such that
\begin{enumerate}
\item $H=\bigoplus_{n=0}^{\infty}H^{(n)}$;
\vskip 0.1in
\item $H^{(p)}H^{(q)}\subseteq H^{(p+q)}, \, p, q \geq 0$; and
\vskip 0.1in
\item $\Delta(H^{(n)})\subseteq \bigoplus_{p+q=n}H^{(p)}\otimes H^{(q)}, n\geq 0$.
\end{enumerate}
Elements of $H^{(n)}$ are called to have degree $n$. $H$ is called $\mathbf{connected}$ if $H^{(0)}=\mathbf{k}$
and $\ker \varepsilon=\bigoplus_{n\geq 1}H^{(n)}$.
It is well-known that a connected graded bialgebra is a Hopf algebra~\mcite{DM}.

\begin{lemma}\cite[Sec.~4.3]{Fois02}~\cite[Sec.~6.3.5]{Man08}
The quintuple $(H_{LR}(\X), \ast, |, \Delta_{LR(\X)}, \varepsilon_{LR(\X)})$ is a connected graded bialgebra with grading $H_{LR}(\X)= \oplus_{n\geq 0} \bfk Y_{n}(\X)$ and hence a Hopf algebra.
\mlabel{lem:hopf1}
\end{lemma}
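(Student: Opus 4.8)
The plan is to verify the bialgebra axioms directly from the recursive definitions in Eqs.~(\mref{eq:astttt}), (\mref{eq:dlr11}) and (\mref{eq:dlr22}) by induction on depth, to observe that the grading and connectedness then come almost for free, and finally to invoke the cited fact that a connected graded bialgebra is a Hopf algebra. The grading is immediate: since $T\vee_\alpha T'$ has exactly one more interior vertex than $T$ and $T'$ together, an easy induction on Eqs.~(\mref{eq:astttt}) and (\mref{eq:dlr22}) shows that, with $H^{(n)}:=\bfk Y_n(\X)$, one has $H^{(p)}\ast H^{(q)}\subseteq H^{(p+q)}$ and $\Delta_{LR(\X)}(H^{(n)})\subseteq\bigoplus_{p+q=n}H^{(p)}\otimes H^{(q)}$. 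Connectedness is equally clear: $Y_0(\X)=\{|\}$ forces $H^{(0)}=\bfk$, and $\varepsilon_{LR(\X)}$ vanishes on every tree with a positive number of interior vertices, so $\ker\varepsilon_{LR(\X)}=\bigoplus_{n\geq1}H^{(n)}$. Hence, once $(H_{LR}(\X),\ast,|,\Delta_{LR(\X)},\varepsilon_{LR(\X)})$ is shown to be a bialgebra, it is connected graded and therefore a Hopf algebra by~\cite{DM}.

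It remains to establish the bialgebra axioms. That $|$ is a two-sided unit for $\ast$ and that $\varepsilon_{LR(\X)}$ is multiplicative are read off directly from Eq.~(\mref{eq:astttt}), the latter because a product of two trees different from $|$ is a sum of trees each having at least one interior vertex. The remaining statements — associativity of $\ast$, the counit property and multiplicativity of $\Delta_{LR(\X)}$, and coassociativity of $\Delta_{LR(\X)}$ — are all proved by induction, on $\dep(T)$ for the one-tree statements and on $\dep(T)+\dep(T')$ for the two-tree statements: one writes $T=T^l\vee_\alpha T^r$ and $T'=T^{'l}\vee_\beta T^{'r}$, expands via Eqs.~(\mref{eq:astttt}) and (\mref{eq:dlr22}), and matches terms, noting that each recursive call strictly lowers the relevant depth since $\dep(T^r)<\dep(T)$ and $\dep(T^{'l})<\dep(T')$. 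These inductions interlock — for instance, the inductive step for coassociativity at $T=T^l\vee_\alpha T^r$ applies $\Delta_{LR(\X)}$ to the $\ast$-products occurring in the first tensor factor of $(\ast,\vee_\alpha)\bigl(\Delta_{LR(\X)}(T^l)\otimes\Delta_{LR(\X)}(T^r)\bigr)$ and so invokes multiplicativity of $\Delta_{LR(\X)}$ for shorter trees, which itself rests on associativity of $\ast$ — so it is cleanest to run them as a single simultaneous induction on depth. (For associativity alone one may also read the two summands on the right of Eq.~(\mref{eq:astttt}) as a pair of half-products, which endow $H_{LR}(\X)$ with a dendriform structure, and deduce associativity of $\ast$ formally from the dendriform axioms.)

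I expect the multiplicativity of $\Delta_{LR(\X)}$ — that $\Delta_{LR(\X)}(T\ast T')=\Delta_{LR(\X)}(T)\ast\Delta_{LR(\X)}(T')$ for the componentwise product on $H_{LR}(\X)\otimes H_{LR}(\X)$ — to be the main obstacle. Since both $\ast$ and $\Delta_{LR(\X)}$ are branching recursions, the inductive step requires expanding the left-hand side via Eqs.~(\mref{eq:astttt}) and (\mref{eq:dlr22}), expanding the right-hand side via Eq.~(\mref{eq:dlr22}) and the componentwise product, and matching the two results term by term; the delicate bookkeeping is to untangle how the operators $(\ast,\vee_\alpha)=(\ast\otimes\vee_\alpha)\circ\tau_{23}$ interact with the componentwise product of coproducts and to track the transposition $\tau_{23}$ throughout. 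This is the computation carried out in~\cite[Sec.~4.3]{Fois02} and~\cite[Sec.~6.3.5]{Man08}, which we follow; together with the preceding two paragraphs it completes the proof.
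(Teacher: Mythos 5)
The paper gives no proof of this lemma at all: it is recalled from the literature, with the statement resting entirely on the citations to \cite[Sec.~4.3]{Fois02} and \cite[Sec.~6.3.5]{Man08}. Your outline is a correct sketch of the standard depth-induction verification (grading, connectedness, unit/counit, and the interlocking inductions for associativity and compatibility of $\Delta_{LR(\X)}$ with $\ast$), and since you ultimately defer the one genuinely laborious computation to the very same references the paper cites, your proposal is consistent with, and somewhat more detailed than, what the paper itself does.
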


If $\X$ is a singleton set, then $Y_\infty(\X)$ is precisely the planar binary trees (without decorations)
and one gets the Loday-Ronco Hopf algebra on planar binary trees~\cite[Thm.~3.1]{LR98}.
\subsection{A combinatorial description of $\Delta_{LR(\X)}$}
Next we give a combinatorial description of the coproduct $\Delta_{LR(\X)}$ by the admissible cut which was introduced by Connes and Kreimer~\mcite{CK98} on rooted trees and further studied by Foissy~\mcite{Foi10} on decorated rooted trees. This notion of cut of rooted trees can be adapted
to decorated planar binary trees as follows.

Let $T\in Y_{\infty}(\X)$ be a decorated planar binary tree. The edges of $T$ are oriented upwards, from root to leaves.
A {\bf (non-total) cut} $c$ is a choice of edges connecting internal
vertices of $T$. Note that an edge connecting a leaf and an internal vertex is not in a cut.
In particular, the {\bf empty cut} is a cut with the choice of no edges. The cut $c$ is called {\bf admissible} if any oriented path from a vertex of the tree to the root meets at most one cut edge.
For an admissible cut $c$, cutting each edge in $c$ into two edges, $T$ is sent to a pair $(P^c(T), R^c(T))$, such that
$R^c(T)$ is the connected component containing the root of $T$ and $P^c(T)$ is the product of the other connected components with respect to
the multiplication $\ast$ given in Eq.~(\mref{eq:astttt}), from left to right. The {\bf total cut} is also added, which is by convention an admissible cut such that
$$R^c(T)=| \text{ and } P^c(T)=T.$$
The set of admissible cuts of $T$ is denoted by $\adm_*(T)$. Let us note that the empty cut is admissible.
Denote by
$$ \adm(T) :=  \adm_*(T) \setminus \{\mbox{empty cut, total cut}\}.$$

\begin{exam}
\begin{enumerate}
\item \mlabel{it:a}
Consider the decorated planar binary tree $T=\YY[line width=0.16ex]{\yy10\yy21
\node[scale=0.8] (x) at (0.75,-0.55) {$\alpha$};
\node[scale=0.8] (y) at (0.1,-0.4) {$\beta$};
\node[scale=0.8] (z) at (0.55,-0.15) {$\gamma$};
}$ with $\alpha, \beta, \gamma \in \X$. It has $2^2$ non-total cuts and one total cut.
$$\begin{array}{|c|c|c|c|c|c|}
\hline \mbox{cut }c&
\mbox{empty}
&\YY[line width=0.16ex]{\yy10\yy21
\node[scale=0.8] (x) at (0.75,-0.55) {$\alpha$};
\node[scale=0.8] (y) at (0.1,-0.4) {$\beta$};
\node[scale=0.8] (z) at (0.55,-0.15) {$\gamma$};
\draw (0.18,-0.375)--+(0.35,0);
}
&\YY[line width=0.16ex]{\yy10\yy21
\node[scale=0.8] (x) at (0.75,-0.55) {$\alpha$};
\node[scale=0.8] (y) at (0.1,-0.4) {$\beta$};
\node[scale=0.8] (z) at (0.55,-0.15) {$\gamma$};
\draw (0.25,-0.1875)--+(0.23,0);
}
&\YY[line width=0.16ex]{\yy10\yy21
\node[scale=0.8] (x) at (0.75,-0.55) {$\alpha$};
\node[scale=0.8] (y) at (0.1,-0.4) {$\beta$};
\node[scale=0.8] (z) at (0.55,-0.15) {$\gamma$};
\draw (0.18,-0.375)--+(0.35,0);
\draw (0.25,-0.1875)--+(0.23,0);
}
&\mbox{total}\\
\hline \mbox{Admissible?}&\mbox{yes}&\mbox{yes}&\mbox{yes}&\mbox{no}&\mbox{yes}\\
\hline R^c(T)
&\YY[line width=0.16ex]{\yy10\yy21
\node[scale=0.8] (x) at (0.75,-0.55) {$\alpha$};
\node[scale=0.8] (y) at (0.1,-0.4) {$\beta$};
\node[scale=0.8] (z) at (0.55,-0.15) {$\gamma$};
}
&\YY{\node[scale=0.8] at (0.75,-0.55) {${\alpha}$};}
&\YY{\yy10
\node[scale=0.8] at (0.75,-0.55) {${\alpha}$};
\node[scale=0.8] at (0.04,-0.3) {${\beta}$};
}
&\times&|\\
\hline P^c(T)&|
&\YY{\yy11
\node[scale=0.8] at (0.25,-0.55) {${\beta}$};
\node[scale=0.8] at (0.95,-0.30) {${\gamma}$};
}
&\YY{\node[scale=0.8] at (0.75,-0.55) {${\gamma}$};}
&\times&
\YY[line width=0.16ex]{\yy10\yy21
\node[scale=0.8] (x) at (0.75,-0.55) {$\alpha$};
\node[scale=0.8] (y) at (0.1,-0.4) {$\beta$};
\node[scale=0.8] (z) at (0.55,-0.15) {$\gamma$};
}\\
\hline \end{array}$$

\item \mlabel{it:b}
Consider the decorated planar binary tree $T=\YY{\yy20\yy23
\node[scale=0.8] at (-0.1,-0.25) {${\beta}$};
\node[scale=0.8] at (1.10,-0.25) {${\gamma}$};
\node[scale=0.8] at (0.75,-0.55) {${\alpha}$};
}$ with $\alpha, \beta, \gamma \in \X$. It has $2^2$ non-total cuts and one total cut.
$$\begin{array}{|c|c|c|c|c|c|}
\hline \mbox{cut }c&
\mbox{empty}
&\YY[line width=0.16ex]{\yy20\yy23
\node[scale=0.8] at (-0.1,-0.25) {${\beta}$};
\node[scale=0.8] at (1.10,-0.25) {${\gamma}$};
\node[scale=0.8] at (0.75,-0.55) {${\alpha}$};
\draw (0.1,-0.275)--+(0.35,0);
}
&\YY[line width=0.16ex]{\yy20\yy23
\node[scale=0.8] at (-0.1,-0.25) {${\beta}$};
\node[scale=0.8] at (1.10,-0.25) {${\gamma}$};
\node[scale=0.8] at (0.75,-0.55) {${\alpha}$};
\draw (0.5,-0.275)--+(0.515,0);
}
&\YY[line width=0.16ex]{\yy20\yy23
\node[scale=0.8] at (-0.1,-0.25) {${\beta}$};
\node[scale=0.8] at (1.10,-0.25) {${\gamma}$};
\node[scale=0.8] at (0.75,-0.55) {${\alpha}$};
\draw (0.1,-0.275)--+(0.35,0);
\draw (0.5,-0.275)--+(0.515,0);
}
&\mbox{total}\\
\hline \mbox{Admissible?}&\mbox{yes}&\mbox{yes}&\mbox{yes}&\mbox{yes}&\mbox{yes}\\
\hline R^c(T)
&\YY[line width=0.16ex]{\yy20\yy23
\node[scale=0.8] at (-0.1,-0.25) {${\beta}$};
\node[scale=0.8] at (1.10,-0.25) {${\gamma}$};
\node[scale=0.8] at (0.75,-0.55) {${\alpha}$};
}
&\YY{\yy11
\node[scale=0.8] at (0.75,-0.55) {${\alpha}$};
\node[scale=0.8] at (0.95,-0.30) {${\gamma}$};
}
&\YY{\yy10
\node[scale=0.8] at (0.75,-0.55) {${\alpha}$};
\node[scale=0.8] at (0.04,-0.3) {${\beta}$};
}
&\YY{\node[scale=0.8] at (0.75,-0.55) {${\alpha}$};}
&|\\
\hline P^c(T)&|
&\YY{\node[scale=0.8] at (0.25,-0.55) {${\beta}$};}
&\YY{\node[scale=0.8] at (0.75,-0.55) {${\gamma}$};}
&\YY{\node[scale=0.8] at (0.25,-0.55) {${\beta}$};}\ast\YY{\node[scale=0.8] at (0.75,-0.55) {${\gamma}$};}
&
\YY[line width=0.16ex]{\yy20\yy23
\node[scale=0.8] at (-0.1,-0.25) {${\beta}$};
\node[scale=0.8] at (1.10,-0.25) {${\gamma}$};
\node[scale=0.8] at (0.75,-0.55) {${\alpha}$};
}\\
\hline \end{array}$$
\end{enumerate}
\mlabel{exam:comb2}
\end{exam}

\begin{remark}
It should be pointed out that our admissible cut is different from the one,
which is introduced by Connes-Kreimer on undecorated planar rooted
trees~\mcite{CK98} and further studied by Foissy on decorated planar rooted trees~\mcite{Foi10}.
For example, under the framework of~\mcite{Foi10}, Foissy gave
$$R^c(~\tquatredeuxa~)=\tddeux{$\alpha$}{$\gamma$}\,  \text{ and }\,P^c(~\tquatredeuxa~)=\tddeux{$\beta$}{$\delta$}. $$
The undecorated case can also be found in ~\cite[Figure~5]{CK98}. Note that the cutting edge is deleted.
However our admissible cut $c$ cuts each cutting edge into two edges.
\mlabel{remk:admi}
\end{remark}

Now we are ready to give a combinatorial description of the coproduct $\Delta_{LR(\X)}$.

\begin{theorem}
Let $T \in  Y_{\infty}(\X)\setminus \{|\}$. Then
\begin{align}
\Delta_{LR(\X)}(T)=\sum_{c\in  \adm_*(T)} P^c(T)\otimes R^c(T)=T\otimes |+|\otimes T+\sum_{c \in  \adm(T)} P^c(T) \otimes R^c(T).
\mlabel{eq:comb}
\end{align}
\mlabel{them:comb}
\end{theorem}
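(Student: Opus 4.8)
The plan is to prove Eq.~(\mref{eq:comb}) by induction on the depth $\dep(T)$, mirroring the recursive definition of $\Delta_{LR(\X)}$ in Eqs.~(\mref{eq:dlr11}) and~(\mref{eq:dlr22}). The base case $\dep(T)=1$ means $T=|\vee_\alpha|$ for some $\alpha\in\X$; here the only admissible cuts are the empty cut (giving $|\otimes T$) and the total cut (giving $T\otimes|$), so the right-hand side of Eq.~(\mref{eq:comb}) is $T\otimes|+|\otimes T$, which matches $\Delta_{LR(\X)}(T)$ by direct computation from Eq.~(\mref{eq:dlr22}). For the inductive step, write $T=T^l\vee_\alpha T^r$ with $\dep(T^l),\dep(T^r)<\dep(T)$, and apply the induction hypothesis to $T^l$ and $T^r$.

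\textbf{Key combinatorial step: decomposing admissible cuts.} The heart of the argument is a bijection between $\adm_*(T)$ and a suitable subset of $\adm_*(T^l)\times\adm_*(T^r)$ compatible with the operation $(\ast,\vee_\alpha)$. Given $T=T^l\vee_\alpha T^r$, an edge of $T$ connecting internal vertices is either an edge of $T^l$, or an edge of $T^r$, or one of the two new edges joining the root of $T^l$ (resp.\ $T^r$) to the new root decorated by $\alpha$ — but the latter two are excluded from cuts only when they meet a leaf; since $T^l$ or $T^r$ may be $|$, I must be careful. I would split into cases: (i) the cut $c$ does not cut the edge from $T^l$'s root to the new root nor the edge from $T^r$'s root to the new root; then $c$ restricts to admissible cuts $c^l$ on $T^l$ and $c^r$ on $T^r$, the root of $T$ survives, $R^c(T)=R^{c^l}(T^l)\vee_\alpha R^{c^r}(T^r)$, and $P^c(T)=P^{c^l}(T^l)\ast P^{c^r}(T^r)$ (reading connected components left to right, the components of $T^l$ come before those of $T^r$); (ii) $c$ cuts the left joining edge — admissibility then forbids any further cut below it on the left, but since that edge goes directly to the new root this just means $c^l$ is the total cut on $T^l$, contributing $T^l$ to the product and leaving $R^c(T)=R^{c^r}(T^r)$ — wait, this needs the new root too, so actually one checks this corresponds to $c^l$ being the total cut and $c^r$ arbitrary; (iii) symmetrically for the right joining edge; (iv) the total cut on $T$ itself. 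I would organize this so that summing $P^c(T)\otimes R^c(T)$ over all $c\in\adm_*(T)$ reproduces exactly $(T^l\vee_\alpha T^r)\otimes|+(\ast,\vee_\alpha)(\Delta_{LR(\X)}(T^l)\otimes\Delta_{LR(\X)}(T^r))$ once the induction hypothesis $\Delta_{LR(\X)}(T^{l/r})=\sum_{c\in\adm_*(T^{l/r})}P^c(T^{l/r})\otimes R^c(T^{l/r})$ is substituted. The term $(T^l\vee_\alpha T^r)\otimes|$ is the total cut of $T$, and the bilinear term unpacks, via $\tau_{23}$, into $\sum_{c^l,c^r} \bigl(P^{c^l}(T^l)\ast P^{c^r}(T^r)\bigr)\otimes\bigl(R^{c^l}(T^l)\vee_\alpha R^{c^r}(T^r)\bigr)$, and I must match this sum termwise with the case-(i) contributions, checking that $R^{c^l}(T^l)\vee_\alpha R^{c^r}(T^r)$ is never $|$ (it has the $\alpha$-root) so no collision with the total-cut term, and that every case-(i) admissible cut of $T$ arises uniquely this way.

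\textbf{Main obstacle.} The delicate point is verifying that the admissibility condition on $T$ (at most one cut edge on each root-ward path) corresponds exactly to the pair of admissibility conditions on $T^l$ and $T^r$ — and in particular handling the two joining edges correctly when $T^l$ or $T^r$ equals $|$ (so there is no joining edge to speak of, since an edge meeting a leaf is never cut). I expect this bookkeeping, together with confirming that the left-to-right ordering of connected components in $P^c(T)$ matches the order imposed by $\ast$ after applying $\tau_{23}$ to $\Delta_{LR(\X)}(T^l)\otimes\Delta_{LR(\X)}(T^r)$, to be the only real content; everything else is routine. Once the bijection and the matching of the two sums is established, Eq.~(\mref{eq:comb}) follows, and the rewriting $\sum_{c\in\adm_*(T)}=T\otimes|+|\otimes T+\sum_{c\in\adm(T)}$ is immediate from the definition of $\adm(T)$ together with $R^{\text{total}}(T)=|,P^{\text{total}}(T)=T$ and $R^{\text{empty}}(T)=T,P^{\text{empty}}(T)=|$.
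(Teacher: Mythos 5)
Your proposal is correct and follows essentially the same route as the paper: induction on $\dep(T)$ using the recursive formula (\mref{eq:dlr22}), substituting the induction hypothesis for $T^l$ and $T^r$, and matching the resulting terms with admissible cuts of $T=T^l\vee_\alpha T^r$ via the correspondence in which cutting a joining edge amounts to taking the total cut on the corresponding subtree (the paper carries this out by explicitly enumerating the eight resulting cut types in the case $T^l,T^r\neq|$ and treating the case where one side is $|$ separately, exactly the bookkeeping you anticipate).
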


\begin{proof}
We prove Eq.~(\mref{eq:comb}) by induction on the depth $\dep(T)\geq 1$. For the initial step of $\dep(T)=1$, we have $T=\YY{\node[scale=0.8] at (0.75,-0.55) {${\alpha}$};}$ for some $\alpha\in \X$. Since $\YY{\node[scale=0.8] at (0.75,-0.55) {${\alpha}$};}$ has only one internal vertex and each edge in a cut can't
connect a leaf and an internal vertex, $T$ has only two cuts---the total cut and the empty cut. Thus $\adm_*(T)$ consists of the total cut and
the empty cut, and so
\begin{align*}
\Delta_{LR(\X)}(\YY{\node[scale=0.8] at (0.75,-0.55) {${\alpha}$};})=\YY{\node[scale=0.8] at (0.75,-0.55) {${\alpha}$};}\otimes |+|\otimes \YY{\node[scale=0.8] at (0.75,-0.55) {${\alpha}$};}.
\end{align*}

For the induction step of $\dep(T)\geq 2$, we may write $T = T^l\vee_{\alpha}T^r$ for some $T^l, T^r\in Y_{\infty}(\X)$ and $\alpha\in \X$.
We have two cases to consider.

\noindent{\bf Case 1.} $\dep(T^l)=0$ and $\dep(T^r)\geq 1$, or $\dep(T^l)\geq 1$ and $\dep(T^r)=0$. Without loss of generality, we consider
$\dep(T^l)=0$ and $\dep(T^r)\geq 1$. Then $T^l=|$ and $T = |\vee_{\alpha}T^r$. By Eq.~(\mref{eq:dlr22}),
\allowdisplaybreaks{
\begin{align*}
\Delta_{LR(\X)}(T)=& \ \Delta_{LR(\X)}(|\vee_{\alpha}T^r)=(|\vee_{\alpha}T^r)\otimes |+(\ast, \vee_\alpha)\bigl(\Delta_{LR(\X)}(|)\otimes \Delta_{LR(\X)}(T^r)\bigr)\\
=&\ T\otimes |+(\ast, \vee_\alpha)\bigl(|\ot |\otimes \Delta_{LR(\X)}(T^r)\bigr)\quad\quad  (\text{by\  Eq.~(\mref{eq:dlr11})} )\\
=&\ T\otimes |+(\ast, \vee_\alpha)\bigg(|\ot |\otimes \Big(T^r\otimes |+|\otimes T^r+\sum_{c \in  \adm(T^r)} P^c(T^r) \otimes R^c(T^r)\Big)\bigg)\\
&\hspace{7cm} (\text{by the induction hypothesis})\\
=&\ T\otimes |+(\ast, \vee_\alpha)\bigg(|\ot |\otimes T^r\otimes |+|\ot |\ot|\otimes T^r+\sum_{c \in  \adm(T^r)} |\ot |\ot P^c(T^r) \otimes R^c(T^r)\bigg)\\
=&\ T\otimes |+(|\ast T^r)\otimes (|\vee_{\alpha}|)+(|\ast |)\ot(|\vee_{\alpha} T^r)+\sum_{c \in  \adm(T^r)} (|\ast  P^c(T^r)) \otimes (|\vee_{\alpha} R^c(T^r))\\
=&\ T\otimes |+ |\ot(|\vee_{\alpha} T^r)+T^r\otimes (|\vee_{\alpha}|)+\sum_{c \in  \adm(T^r)}   P^c(T^r) \otimes (|\vee_{\alpha} R^c(T^r))\\
=&\ T\otimes |+ |\ot T+\sum_{c \in  \adm(T)}   P^c(T) \otimes  R^c(T^r).
\end{align*}
}
\noindent{\bf Case 2.} $\dep(T^l)\geq 1$ and $\dep(T^r)\geq 1$. Then $T = T^l\vee_{\alpha}T^r$ with $T^l\neq |$ and $T^r\neq |$.
It follows from Eq.~(\mref{eq:dlr22}) that
\allowdisplaybreaks{
\begin{align}\nonumber
\Delta_{LR(\X)}(T)=&\ \Delta_{LR(\X)}(T^l\vee_{\alpha}T^r)= (T^l\vee_{\alpha}T^r)\otimes |+(\ast, \vee_\alpha)\bigl(\Delta_{LR(\X)}(T^l)\otimes \Delta_{LR(\X)}(T^r)\bigr)\\\nonumber
=&\ T\otimes |+(\ast, \vee_\alpha)\biggl(\Big(T^l\otimes |+|\otimes T^l+\sum_{c \in  \adm(T^l)} P^c(T^l) \otimes R^c(T^l)\Big)\otimes \\\nonumber
&\ \Big(T^r\otimes |+|\otimes T^r+\sum_{c' \in  \adm(T^r)} P^{c'}(T^r) \otimes R^{c'}(T^r)\Big)\biggr)\hspace{1cm} (\text{by the induction hypothesis})\\ \nonumber
=&\ T\otimes |+(\ast, \vee_\alpha)\biggl(T^l\ot | \ot T^r\ot | +T^l\otimes |\ot |\otimes T^r+T^l\ot |\ot \sum_{c' \in  \adm(T^r)} P^{c'}(T^r) \otimes R^{c'}(T^r)\\\nonumber
&+|\otimes T^l \ot T^r\otimes |+|\otimes T^l \ot |\otimes T^r+|\otimes T^l \ot \sum_{c' \in  \adm(T^r)} P^{c'}(T^r) \otimes R^{c'}(T^r)\\\nonumber
&+\sum_{c \in  \adm(T^l)} P^c(T^l) \otimes R^c(T^l)\ot T^r\otimes |
+\sum_{c \in  \adm(T^l)} P^c(T^l)\ot R^c(T^l) \ot |\otimes T^r\\\nonumber
&+\sum_{c \in  \adm(T^l)} \sum_{c' \in  \adm(T^r)}P^{c}(T^l)\ot R^{c}(T^l)\ot  P^{c'}(T^r) \otimes R^{c'}(T^r)\biggr)\\\nonumber
=&\ T\otimes |+(T^l\ast T^r) \ot (| \vee_\alpha |) +T^l\otimes (|  \vee_\alpha T^r)+\sum_{c' \in  \adm(T^r)} (T^l\ast P^{c'}(T^r)) \otimes( |\vee_\alpha  R^{c'}(T^r))\\\nonumber
&+T^r\otimes (T^l\vee_{\alpha} |)+|\ot (T^l\vee_\alpha T^r)+\sum_{c' \in  \adm(T^r)} P^{c'}(T^r) \otimes (T^l\vee_\alpha R^{c'}(T^r))\\\nonumber
&+\sum_{c \in  \adm(T^l)}( P^c(T^l) \ast T^r)\otimes (R^c(T^l)\vee_\alpha |)
+\sum_{c \in  \adm(T^l)} P^c(T^l) \ot ( R^c(T^l)\vee_\alpha T^r) \\\nonumber
&+\sum_{c \in  \adm(T^l)} \sum_{c' \in  \adm(T^r)}(P^c(T^l)\ast P^{c'}(T^r)) \otimes (R^c(T^l)\vee_\alpha R^{c'}(T^r))\\ \mlabel{eq:comb2}
=&\ T\otimes |+|\ot T+(T^l\ast T^r) \ot (| \vee_\alpha |) +T^l\otimes (|  \vee_\alpha T^r)+T^r\otimes (T^l\vee_{\alpha} |)\\\nonumber
&+\sum_{c' \in  \adm(T^r)} (T^l\ast P^{c'}(T^r)) \otimes( |\vee_\alpha  R^{c'}(T^r))+\sum_{c' \in  \adm(T^r)} P^{c'}(T^r) \otimes (T^l\vee_\alpha R^{c'}(T^r))\\\nonumber
&+\sum_{c \in  \adm(T^l)}( P^c(T^l) \ast T^r)\otimes (R^c(T^l)\vee_\alpha |)
+\sum_{c \in  \adm(T^l)} P^c(T^l) \ot ( R^c(T^l)\vee_\alpha T^r) \\\nonumber
&+\sum_{c \in  \adm(T^l)} \sum_{c' \in  \adm(T^r)}(P^c(T^l)\ast P^{c'}(T^r)) \otimes (R^c(T^l)\vee_\alpha R^{c'}(T^r)).
\end{align}
}
We may draw the decorated planar binary tree $T$ graphically as
$$T=\YY{
\node[scale=0.8] at (0.75,-0.55) {${\alpha}$};
\node[above=-4pt] at (0,0) {$\renewcommand\arraystretch{1}\begin{array}{c}T_l\\ \smash\vdots\end{array}$};
\node[above=-4pt] at (1,0) {$\renewcommand\arraystretch{1}\begin{array}{c}T_r\\ \smash\vdots\end{array}$};
}.$$
Then all kinds of admissible cuts in $\adm(T)$ can be illustrated graphically as:
\begin{align*}
 \adm(T)= \left\{
\YY[baseline=0ex]{
\node[scale=0.8] at (0.75,-0.55) {${\alpha}$};
\node[above=-4pt] at (0,0) {$\renewcommand\arraystretch{1}\begin{array}{c}T_l\\ \smash\vdots\end{array}$};
\node[above=-4pt] at (1,0) {$\renewcommand\arraystretch{1}\begin{array}{c}T_r\\ \smash\vdots\end{array}$};
\draw (0.1,-0.2875)--+(0.3,0);
\draw (0.5,-0.2875)--+(0.52,0);
},
\YY[baseline=0ex] {
\node[scale=0.8] at (0.75,-0.55) {${\alpha}$};
\node[above=-4pt] at (0,0) {$\renewcommand\arraystretch{1}\begin{array}{c}T_l\\ \smash\vdots\end{array}$};
\node[above=-4pt] at (1,0) {$\renewcommand\arraystretch{1}\begin{array}{c}T_r\\ \smash\vdots\end{array}$};
\draw (0.1,-0.2875)--+(0.38,0);
},
\YY[baseline=0ex] {
\node[scale=0.8] at (0.75,-0.55) {${\alpha}$};
\node[above=-4pt] at (0,0) {$\renewcommand\arraystretch{1}\begin{array}{c}T_l\\ \smash\vdots\end{array}$};
\node[above=-4pt] at (1,0) {$\renewcommand\arraystretch{1}\begin{array}{c}T_r\\ \smash\vdots\end{array}$};
\draw (0.438,-0.2875)--+(0.4392,0);
},
\YY[baseline=0ex] {
\node[scale=0.8] at (0.75,-0.55) {${\alpha}$};
\node[above=-4pt] at (0,0) {$\renewcommand\arraystretch{1}\begin{array}{c}T_l\\ \smash\vdots\end{array}$};
\node[above=-4pt] at (1,0) {$\renewcommand\arraystretch{1}\begin{array}{c}T_r\\ \smash\vdots\end{array}$};
\draw (0.1,-0.2875)--+(0.38,0);
\draw (0.654,0.3975)--+(0.655,0);
},
\YY[baseline=0ex] {
\node[scale=0.8] at (0.75,-0.55) {${\alpha}$};
\node[above=-4pt] at (0,0) {$\renewcommand\arraystretch{1}\begin{array}{c}T_l\\ \smash\vdots\end{array}$};
\node[above=-4pt] at (1,0) {$\renewcommand\arraystretch{1}\begin{array}{c}T_r\\ \smash\vdots\end{array}$};
\draw (0.654,0.3975)--+(0.655,0);
},
\YY[baseline=0ex] {
\node[scale=0.8] at (0.75,-0.55) {${\alpha}$};
\node[above=-4pt] at (0,0) {$\renewcommand\arraystretch{1}\begin{array}{c}T_l\\ \smash\vdots\end{array}$};
\node[above=-4pt] at (1,0) {$\renewcommand\arraystretch{1}\begin{array}{c}T_r\\ \smash\vdots\end{array}$};
\draw (-0.3,0.3975)--+(0.58,0);
\draw (0.438,-0.2875)--+(0.4392,0);
},
\YY[baseline=0ex] {
\node[scale=0.8] at (0.75,-0.55) {${\alpha}$};
\node[above=-4pt] at (0,0) {$\renewcommand\arraystretch{1}\begin{array}{c}T_l\\ \smash\vdots\end{array}$};
\node[above=-4pt] at (1,0) {$\renewcommand\arraystretch{1}\begin{array}{c}T_r\\ \smash\vdots\end{array}$};
\draw (-0.3,0.3975)--+(0.58,0);
},
\YY[baseline=0ex] {
\node[scale=0.8] at (0.75,-0.55) {${\alpha}$};
\node[above=-4pt] at (0,0) {$\renewcommand\arraystretch{1}\begin{array}{c}T_l\\ \smash\vdots\end{array}$};
\node[above=-4pt] at (1,0) {$\renewcommand\arraystretch{1}\begin{array}{c}T_r\\ \smash\vdots\end{array}$};
\draw (0.654,0.3975)--+(0.655,0);
\draw (-0.3,0.3975)--+(0.58,0);}
\right\}.
\end{align*}
Note that the last eight terms in Eq.~(\mref{eq:comb2}) are precisely corresponding to the eight kinds of admissible cuts in $ \adm(T)$.
Thus
\begin{align*}
\Delta_{LR(\X)}(T)=\Delta_{LR(\X)}(T^l\vee_{\alpha}T^r)=T\otimes |+|\otimes T+\sum_{c \in  \adm(T)} P^c(T) \otimes R^c(T).
\end{align*}
This completes the proof.
\end{proof}

\begin{exam}
\begin{enumerate}
\item \label{exam:a}
Consider the planar binary tree $T=\YY{\yy10\yy21
\node[scale=0.8] at (0.75,-0.65) {$\alpha$};
\node[scale=0.8] at (0.05,-0.4) {$\beta$};
\node[scale=0.7] at (0.55,-0.2) {$\gamma$};
}$. By Theorem~\mref{them:comb} and Example~\mref{exam:comb2}~(\mref{it:a}), we have
\begin{align*}
\Delta_{LR(X)}(\YY{\yy10\yy21
\node[scale=0.8] at (0.75,-0.55) {$\alpha$};
\node[scale=0.8] at (0.05,-0.4) {$\beta$};
\node[scale=0.7] at (0.55,-0.2) {$\gamma$};
})= \YY{\yy10\yy21
\node[scale=0.8] at (0.75,-0.55) {$\alpha$};
\node[scale=0.8] at (0.05,-0.4) {$\beta$};
\node[scale=0.7] at (0.55,-0.2) {$\gamma$};
}\otimes |
+
|\otimes
\YY{\yy10\yy21
\node[scale=0.8] at (0.75,-0.55) {$\alpha$};
\node[scale=0.8] at (0.05,-0.4) {$\beta$};
\node[scale=0.7] at (0.55,-0.2) {$\gamma$};
}+
\YY{\yy11
\node[scale=0.8] at (0.25,-0.55) {$\beta$};
\node[scale=0.8] at (0.95,-0.30) {$\gamma$};
}\otimes\YY{\node[scale=0.8] at (0.75,-0.55) {$\alpha$};}+\YY{\node[scale=0.8] at (0.75,-0.55) {$\gamma$};}\otimes
\YY{\yy10
\node[scale=0.8] at (0.75,-0.55) {$\alpha$};
\node[scale=0.8] at (0.04,-0.3) {$\beta$};
}.
\end{align*}

\item \label{exam:b}
Let $T=\YY{\yy20\yy23
\node[scale=0.8] at (-0.1,-0.25) {$\beta$};
\node[scale=0.8] at (1.10,-0.25) {$\gamma$};
\node[scale=0.8] at (0.75,-0.55) {$\alpha$};
}$. It follows from Theorem~\mref{them:comb} and Example~\mref{exam:comb2}~(\mref{it:b}) that
\begin{align*}
\Delta_{LR(X)}(\YY{\yy20\yy23
\node[scale=0.8] at (-0.1,-0.25) {$\beta$};
\node[scale=0.8] at (1.10,-0.25) {$\gamma$};
\node[scale=0.8] at (0.75,-0.55) {$\alpha$};
})=&\YY{\yy20\yy23
\node[scale=0.8] at (-0.1,-0.25) {$\beta$};
\node[scale=0.8] at (1.10,-0.25) {$\gamma$};
\node[scale=0.8] at (0.75,-0.55) {$\alpha$};
}\otimes |
+|\otimes
\YY{\yy20\yy23
\node[scale=0.8] at (-0.1,-0.25) {$\beta$};
\node[scale=0.8] at (1.10,-0.25) {$\gamma$};
\node[scale=0.8] at (0.75,-0.55) {$\alpha$};
}+
(\YY{\node[scale=0.8] at (0.25,-0.55) {$\beta$};}\ast\YY{\node[scale=0.8] at (0.75,-0.55) {$\gamma$};})\otimes \YY{\node[scale=0.8] at (0.75,-0.55) {$\alpha$};}+\YY{\node[scale=0.8] at (0.25,-0.55) {$\beta$};}\otimes
\YY{\yy11
\node[scale=0.8] at (0.75,-0.55) {$\alpha$};
\node[scale=0.8] at (0.95,-0.30) {$\gamma$};
}+\YY{\node[scale=0.8] at (0.75,-0.55) {$\gamma$};}\otimes\YY{\yy10
\node[scale=0.8] at (0.75,-0.55) {$\alpha$};
\node[scale=0.8] at (0.04,-0.3) {$\beta$};
}\\
=&\YY{\yy20\yy23
\node[scale=0.8] at (-0.1,-0.25) {$\beta$};
\node[scale=0.8] at (1.10,-0.25) {$\gamma$};
\node[scale=0.8] at (0.75,-0.55) {$\alpha$};
}\otimes |+|\otimes
\YY{\yy20\yy23
\node[scale=0.8] at (-0.1,-0.25) {$\beta$};
\node[scale=0.8] at (1.10,-0.25) {$\gamma$};
\node[scale=0.8] at (0.75,-0.55) {$\alpha$};
}+
(\YY{\yy11
\node[scale=0.8] at (0.25,-0.55) {$\beta$};
\node[scale=0.8] at (0.95,-0.30) {$\gamma$};
}+
\YY{\yy10
\node[scale=0.8] at (0.75,-0.55) {$\gamma$};
\node[scale=0.8] at (0.04,-0.3) {$\beta$};
})\otimes \YY{\node[scale=0.8] at (0.75,-0.55) {$\alpha$};}+\YY{\node[scale=0.8] at (0.25,-0.55) {$\beta$};}\otimes
\YY{\yy11
\node[scale=0.8] at (0.75,-0.55) {$\alpha$};
\node[scale=0.8] at (0.95,-0.30) {$\gamma$};
}+\YY{\node[scale=0.8] at (0.75,-0.55) {$\gamma$};}\otimes\YY{\yy10
\node[scale=0.8] at (0.75,-0.55) {$\alpha$};
\node[scale=0.8] at (0.04,-0.3) {$\beta$};
}.
\end{align*}
\end{enumerate}
Observe that the results in~(\mref{exam:a}) and (\mref{exam:b}) are consistent with the corresponding ones in Example~\mref{ex:del}.
\end{exam}

As a direct consequence of Theorem~\mref{them:comb}, we may give another proof of the following result, which was obtained
in~\cite[Sec.~4.3]{Fois02} and~\cite[Sec.~6.3.5]{Man08}.

\begin{coro} For each $n\geq 0$,
$$\Delta_{LR(\X)}(\bfk Y_n(\X)) \subseteq \bigoplus_{p+q = n} \bfk Y_p(\X) \ot \bfk Y_q(\X).$$
\end{coro}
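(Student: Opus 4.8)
The plan is to deduce this directly from the combinatorial formula
$\Delta_{LR(\X)}(T)=\sum_{c\in\adm_*(T)}P^c(T)\otimes R^c(T)$
of Theorem~\mref{them:comb}. Fix a basis element $T\in Y_n(\X)$; by linearity it suffices to check that each summand $P^c(T)\otimes R^c(T)$, for $c\in\adm_*(T)$, lies in $\bfk Y_p(\X)\otimes\bfk Y_q(\X)$ for suitable $p,q$ with $p+q=n$. The empty cut contributes $|\otimes T\in\bfk Y_0(\X)\otimes\bfk Y_n(\X)$ and the total cut contributes $T\otimes|\in\bfk Y_n(\X)\otimes\bfk Y_0(\X)$, so only the cuts in $\adm(T)$ require an argument.

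First I would record the auxiliary fact that the product $\ast$ of Eq.~(\mref{eq:astttt}) is homogeneous for the vertex grading, that is, $\bfk Y_a(\X)\ast\bfk Y_b(\X)\subseteq\bfk Y_{a+b}(\X)$ for all $a,b\geq 0$. This is a routine induction on $\dep(T)+\dep(T')$: if $T=|$ or $T'=|$ it is clear since $Y_0(\X)=\{|\}$; otherwise, writing $T=T^l\vee_\alpha T^r$ with $T^l\in Y_{a_1}(\X)$, $T^r\in Y_{a_2}(\X)$ and $a_1+a_2+1=a$, and $T'=T'^l\vee_\beta T'^r\in Y_b(\X)$, the inductive hypothesis gives $T^r\ast T'\in\bfk Y_{a_2+b}(\X)$, whence $T^l\vee_\alpha(T^r\ast T')\in\bfk Y_{a_1+a_2+b+1}(\X)=\bfk Y_{a+b}(\X)$, and the summand $(T\ast T'^l)\vee_\beta T'^r$ is treated symmetrically. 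Consequently the $\ast$-product $C_1\ast\cdots\ast C_k$ of trees $C_i\in Y_{p_i}(\X)$ lies in $\bfk Y_{p_1+\cdots+p_k}(\X)$, irrespective of the order of the factors.

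Next I would make the geometric observation underpinning everything: since a cut consists only of edges joining two internal vertices, performing the cut --- which in our convention splits each cut edge into two edges rather than deleting it (cf.\ Remark~\mref{remk:admi}) --- creates and destroys no internal vertex. Hence the $n$ internal vertices of $T$ are partitioned among the connected components obtained after cutting $c\in\adm(T)$. Writing $R^c(T)$ for the root component, with $q$ internal vertices, and $C_1,\dots,C_k$ for the other components, with $p_1,\dots,p_k$ internal vertices, we get $q+p_1+\cdots+p_k=n$; by the previous paragraph $P^c(T)=C_1\ast\cdots\ast C_k\in\bfk Y_p(\X)$ with $p:=p_1+\cdots+p_k$, and $R^c(T)\in\bfk Y_q(\X)$, with $p+q=n$. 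Summing over all $c\in\adm_*(T)$ and invoking Theorem~\mref{them:comb} yields $\Delta_{LR(\X)}(T)\in\bigoplus_{p+q=n}\bfk Y_p(\X)\otimes\bfk Y_q(\X)$, and the statement follows since $\bfk Y_n(\X)$ is spanned by such $T$.

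I do not anticipate any genuine obstacle: the proof is pure bookkeeping once Theorem~\mref{them:comb} is available. The only point deserving a moment's care is the behaviour of the internal-vertex count under cutting, but because cut edges join internal vertices only, "splitting an edge into two" visibly preserves this count; the rest reduces to the homogeneity of $\ast$, which is immediate from its recursive definition.
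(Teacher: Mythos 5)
Your proposal is correct and follows essentially the same route as the paper: reduce to Theorem~\mref{them:comb} and observe that an admissible cut partitions the $n$ interior vertices among the components while $\ast$ is homogeneous for the vertex grading. The only difference is that you prove the homogeneity of $\ast$ by a short induction on depths, whereas the paper simply cites Foissy for this fact.
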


\begin{proof}
Let $T\in Y_n(\X)$. Denote by $\inte(T)$ the set of interior vertices of $T$. Then $|\inte(T)| = n$.
For an admissible cut $c$ in $\adm_*(T)$,
write
$$P^c(T) = T_1 \ast \cdots \ast T_k\,\text{ and }\, R^c(T) = T_{k+1}\,\text{ for some }\ k\geq 0.$$
Here we use the convention that $P^c(T) = |$ when $k=0$. By~\cite[Sec.~4.3]{Fois02}, the number of interior vertices of each summand in $T_1 \ast \cdots \ast T_k$ is  $\sum_{i=1}^k |\inte(T_i)|$.
So by Theorem~\mref{them:comb}, the number of interior vertices of each summand in $\Delta_{LR(\X)}(T)$ is
$$\sum_{i=1}^{k} |\inte(T_i)| +|\inte(T_{k+1})| = |\inte(T)| = n,$$
whence $$\Delta_{LR(\X)} \in \bigoplus_{p+q = n} \bfk Y_p(\X) \ot \bfk Y_q(\X),$$
as required.
\end{proof}

\begin{remark}
By Theorem~\mref{them:comb}, the fact that $H_{LR}(\X)$ is a connected graded bialgebra is obvious.
\end{remark}

\subsection{Subcoalgebras of coalgebra of decorated planar binary trees}
In this subsection, we only consider the aforementioned coalgebraic structure on decorated planar binary trees,
and show that $H_{LR}(\X)$ is a strictly graded coalgebra.

Let $C$ be a coalgebra. If there exists a family of $\bfk$-submodules $\{C^{(n)}\mid n\geq 0\}$ of $C$ such that
\begin{enumerate}
\item $C=\bigoplus_{n\geq 0}C^{(n)}$;
\vskip 0.1in
\item $\varepsilon (C^{(n)})=0$ , $n\neq 0$; and
\vskip 0.1in
\item $\Delta(C^{(n)})\subseteq \bigoplus_{p+q=n}C^{(p)}\ot C^{(q)}, n\geq 0$,
\end{enumerate}
then $C$ is called a {\bf graded coalgebra}.
If in particular, $$C^{(0)}\cong \bfk \text { and } C^{(1)}=P(C),$$
then $C$ is said to be a {\bf strictly graded coalgebra}~\cite[Chap.~4.1]{Eii80}, where $P(C)$ is the set of primitive elements of $C$.

\smallskip

\begin{defn}~\cite[Chap.~3.1]{Eii80}
Let $C$ be a coalgebra.
\begin{enumerate}
\item A subcoalgebra $M$ of $C$ is called a {\bf simple subcoalgebra} if it does not have any subcoalgebras other than $0$ and $M$.

 \item $C$ is called {\bf irreducible} if $C$ has only one simple subcoalgebra.

 \item $C$ is called {\bf pointed} if all simple subcoalgebras of $C$ are one dimensional.
 \end{enumerate}
\end{defn}

\smallskip

\begin{lemma}~\cite[Chap.~4.1]{Eii80}
A strictly graded coalgebra is a pointed irreducible coalgebra.
\mlabel{lem:sgc}
\end{lemma}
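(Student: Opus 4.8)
The plan is to reduce both conclusions to one statement: a strictly graded coalgebra $C=\bigoplus_{n\geq 0}C^{(n)}$ has exactly one simple subcoalgebra, and it is one-dimensional. Indeed, ``irreducible'' is precisely the uniqueness of the simple subcoalgebra and ``pointed'' is precisely the one-dimensionality of every simple subcoalgebra, so this single claim gives Lemma~\mref{lem:sgc}. First I would pin down the canonical grouplike element: since $C\neq 0$ (its degree-zero part is $\bfk$) its counit $\varepsilon$ is nonzero, and $\varepsilon$ vanishes on $C^{(n)}$ for $n\geq 1$, so the restriction $\varepsilon|_{C^{(0)}}\colon C^{(0)}\to\bfk$ is an isomorphism; writing $g$ for the unique element of $C^{(0)}$ with $\varepsilon(g)=1$ we get $C^{(0)}=\bfk g$, and then $\Delta(g)\in C^{(0)}\ot C^{(0)}=\bfk(g\ot g)$ together with $(\varepsilon\ot\id)\Delta(g)=g$ forces $\Delta(g)=g\ot g$. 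Thus $\bfk g$ is a one-dimensional, hence simple, subcoalgebra, so $C$ does have a simple subcoalgebra; it remains to show that any simple subcoalgebra $S$ equals $\bfk g$.

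For this I would use the grading filtration $F_n:=\bigoplus_{i=0}^{n}C^{(i)}$, each of which is a subcoalgebra because $\Delta(C^{(i)})\subseteq\bigoplus_{p+q=i}C^{(p)}\ot C^{(q)}\subseteq F_i\ot F_i$, and one has $C=\bigcup_n F_n$. Given a simple subcoalgebra $S$, the fundamental theorem of coalgebras puts any nonzero $s\in S$ inside a finite-dimensional subcoalgebra $D$; then $D\cap S$ is a nonzero subcoalgebra of the simple $S$, so $S=D\cap S\subseteq D$ is finite-dimensional, hence $S\subseteq F_N$ for some $N$, which I take to be minimal. If $N=0$ then $0\neq S\subseteq\bfk g$, so $S=\bfk g$. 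If $N\geq 1$, then $S\cap F_{N-1}$ is a subcoalgebra of $S$ which is proper by minimality of $N$, hence $0$ by simplicity; since $\ker\bigl(p_N|_{F_N}\bigr)=F_{N-1}$ for the grading projection $p_N\colon F_N\twoheadrightarrow C^{(N)}$, the map $p_N|_S\colon S\hookrightarrow C^{(N)}$ is injective, and therefore (working over the ground field) $p_N\ot p_N$ is injective on $S\ot S$. But for any $s\in S\subseteq F_N$ the component of $\Delta(s)$ in $C^{(N)}\ot C^{(N)}$ only sees the degree-$2N$ homogeneous part of $s$, which is $0$ because $2N>N$; hence $(p_N\ot p_N)\circ\Delta|_S=0$, and injectivity of $p_N\ot p_N$ on $S\ot S$ forces $\Delta|_S=0$, contradicting $(\id\ot\varepsilon)\circ\Delta|_S=\id_S$. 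So $N=0$ in all cases, and $S=\bfk g$.

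Hence $\bfk g$ is the unique simple subcoalgebra of $C$ and it is one-dimensional, so $C$ is both irreducible and pointed, proving Lemma~\mref{lem:sgc}. The one step that needs care is the case $N\geq 1$: the contradiction hinges on the degree bookkeeping $2N>N$ together with the injectivity of $p_N$ on $S$ coming from minimality of $N$; everything else is routine coalgebra formalism. I note that the argument uses only the part ``$C^{(0)}\cong\bfk$'' of the strictly-graded hypothesis, the condition $C^{(1)}=P(C)$ not being needed for this particular statement (it does, however, sharpen the relation between the grading and the coradical filtration). Alternatively, one may invoke the standard description, valid for a graded coalgebra whose degree-zero part is cosemisimple, that the coradical equals $C^{(0)}$ and the coradical filtration is $\{F_n\}_{n\geq 0}$, from which pointedness and irreducibility are formal; throughout we work in the classical field setting of~\cite{Eii80}.
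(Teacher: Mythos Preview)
The paper does not prove Lemma~\mref{lem:sgc}; it is quoted from Abe~\cite{Eii80} without argument, so there is no in-paper proof to compare against. Your argument is correct and self-contained: you identify the canonical grouplike $g$ spanning $C^{(0)}$, and then show by a minimal-$N$ argument on the grading filtration that every simple subcoalgebra lies in $C^{(0)}$, hence equals $\bfk g$. The key step---that $(p_N\ot p_N)\circ\Delta|_S=0$ forces $\Delta|_S=0$---is sound because $\Delta(S)\subseteq S\ot S$ (as $S$ is a subcoalgebra) and, over a field, the tensor product of injections is injective. Your closing remarks are also accurate: the hypothesis $C^{(1)}=P(C)$ is never used, and the field assumption is genuinely needed (for the fundamental theorem of coalgebras and for injectivity of $p_N\ot p_N$), which matches the setting of~\cite{Eii80} even though the surrounding paper allows $\bfk$ to be a commutative ring.
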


Narrowing our attention to the coalgebraic structure of $H_{LR}(\X)$, we obtain

\begin{theorem}
The coalgebra $(H_{LR}(\X), \Delta_{LR(\X)}, \varepsilon_{LR(\X)})$ is a strictly graded coalgebra with
the grading $H_{LR}(\X)= \oplus_{n\geq 0} \bfk Y_{n}(\X)$ and hence has only one simple subcoalgebra $\bfk \{|\}$.
\mlabel{them:sgc}
\end{theorem}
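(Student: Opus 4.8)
The plan is to verify the three defining conditions of a graded coalgebra and then pin down the degree-$0$ and degree-$1$ pieces. The grading $H_{LR}(\X) = \bigoplus_{n\geq 0} \bfk Y_n(\X)$ together with condition (3) is exactly the content of the Corollary just proved, and condition (2) is immediate from the definition of $\varepsilon_{LR(\X)}$, which vanishes on every $T \neq |$ and hence on $\bfk Y_n(\X)$ for $n\geq 1$; condition (1) is the definition of the grading. So the real work is the ``strictly'' part: showing $\bfk Y_0(\X) = \bfk\{|\} \cong \bfk$ (trivial) and $\bfk Y_1(\X) = P(H_{LR}(\X))$, i.e. that the space of primitive elements is exactly the span of the one-vertex trees $\YY{\node[scale=0.8] at (0.75,-0.55) {${\alpha}$};}$, $\alpha\in\X$.

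First I would show $\bfk Y_1(\X) \subseteq P(H_{LR}(\X))$: from Example~\mref{ex:del} (or directly from Eqs.~(\mref{eq:dlr11})--(\mref{eq:dlr22}) with $T^l = T^r = |$) one has $\Delta_{LR(\X)}(\YY{\node[scale=0.8] at (0.75,-0.55) {${\alpha}$};}) = \YY{\node[scale=0.8] at (0.75,-0.55) {${\alpha}$};}\otimes | + |\otimes \YY{\node[scale=0.8] at (0.75,-0.55) {${\alpha}$};}$, so each generator of $\bfk Y_1(\X)$ is primitive. For the reverse inclusion, the cleanest route is to invoke the combinatorial formula of Theorem~\mref{them:comb}: for $T\in Y_n(\X)$ with $n\geq 1$,
$$\Delta_{LR(\X)}(T) = T\otimes | + |\otimes T + \sum_{c\in\adm(T)} P^c(T)\otimes R^c(T),$$
so $T$ is primitive if and only if $\adm(T) = \varnothing$, i.e. $T$ admits no non-empty, non-total admissible cut. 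A decorated planar binary tree with $n\geq 2$ interior vertices has an interior edge (an edge joining two interior vertices), and cutting that single edge is a non-empty non-total admissible cut, so $\adm(T)\neq\varnothing$; hence no $T$ of degree $\geq 2$ is primitive. Since $\Delta_{LR(\X)}$ is graded and a general element decomposes into homogeneous components, a primitive element lying outside $\bfk\{|\}$ must, after subtracting its degree-$0$ part, be a sum of homogeneous primitive pieces; the degree-$\geq 2$ pieces vanish by the above, leaving only the degree-$1$ part, and one checks the degree-$0$ component of a primitive element is forced to be $0$ (since $\Delta(|) = |\otimes|$ is not primitive-compatible with nonzero scalar multiple unless that multiple is $0$). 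This gives $P(H_{LR}(\X)) = \bfk Y_1(\X)$, so $H_{LR}(\X)$ is strictly graded.

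The final assertion—that $H_{LR}(\X)$ has a unique simple subcoalgebra, namely $\bfk\{|\}$—then follows formally: by Lemma~\mref{lem:sgc} a strictly graded coalgebra is pointed and irreducible, and irreducibility means there is exactly one simple subcoalgebra; since $\varepsilon_{LR(\X)}(|) = 1_\bfk \neq 0$, the one-dimensional space $\bfk\{|\}$ is a (grouplike) subcoalgebra, hence it is that unique simple subcoalgebra.

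The main obstacle is the characterization of primitive elements. The subtle point is not the homogeneous case (which the combinatorial formula dispatches cleanly) but making sure a primitive element cannot ``hide'' a nonzero degree-$0$ component or mix degrees in a way that cancels; this is handled by the standard observation that in a graded coalgebra the comultiplication respects the grading, so primitivity can be checked degree by degree, together with the remark that $\varepsilon_{LR(\X)}(x) = 0$ for any primitive $x$ forces its degree-$0$ part to vanish. Once that reduction is in place, the argument ``$n\geq 2 \Rightarrow$ there is a cuttable interior edge'' is elementary, and everything else is bookkeeping.
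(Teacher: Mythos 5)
Your reading of what must be proved is correct, and most of the scaffolding is sound: conditions (1)--(3) of a graded coalgebra, the identification $\bfk Y_0(\X)\cong\bfk$, the vanishing of the degree-$0$ component of a primitive element, and the reduction of primitivity to homogeneous components are all fine. The gap is the inference ``each tree $T$ of degree $n\geq 2$ has $\adm(T)\neq\varnothing$, hence the degree-$\geq 2$ pieces of a primitive element vanish.'' What that argument establishes is only that no \emph{single basis tree} of degree $\geq 2$ is primitive. A homogeneous element is a linear combination $\sum_i c_iT_i$, and its reduced coproduct is $\sum_i c_i\sum_{c\in\adm(T_i)}P^c(T_i)\otimes R^c(T_i)$; nothing you say excludes cancellation between the contributions of distinct trees, and that is precisely what must be excluded to conclude $P(H_{LR}(\X))=\bfk Y_1(\X)$.

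Such cancellation actually occurs. By the paper's own Example~\mref{ex:del}, the two trees of $Y_2(\X)$ whose root is decorated by $\alpha$ and whose other interior vertex is decorated by $\beta$ both have reduced coproduct equal to $\YY{\node[scale=0.8] at (0.75,-0.55) {${\beta}$};}\otimes\YY{\node[scale=0.8] at (0.75,-0.55) {${\alpha}$};}$, so their difference is a primitive element of degree $2$. This already happens when $\X$ is a singleton, i.e.\ in $H_{LR}$ itself, consistent with the known fact that the primitive space of the Loday--Ronco Hopf algebra is nonzero in every positive degree. Hence $P(H_{LR}(\X))\supsetneq\bfk Y_1(\X)$, your step cannot be repaired, and the ``strictly graded'' conclusion fails as stated. (The final assertion that $\bfk\{|\}$ is the unique simple subcoalgebra is still true, but because $H_{LR}(\X)$ is connected graded, not via strict gradedness.) For what it is worth, the paper's own proof asserts $\bfk Y_1(\X)=P(H_{LR}(\X))$ ``by Theorem~\mref{them:comb}'' with no further justification, so the defect lies in the statement itself rather than only in your write-up; still, the justification you propose for that step is exactly where the argument breaks.
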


\begin{proof}
By Lemma~\mref{lem:hopf1}, $H_{LR}(\X)= \oplus_{n\geq 0} \bfk Y_{n}(\X)$ is a graded coalgebra. Since $Y_0(\X) = Y_0 = \{|\}$,
we have $\mathbf{k}Y_{0}(\X)=\mathbf{k}$.
Furthermore,
$$\mathbf{k}Y_{1}(\X)= \mathbf{k} \{\YY{\node[scale=0.8] at (0.75,-0.55) {${\alpha}$};}\mid \alpha\in \X\}$$
is the set of primitive elements of $H_{LR}(\X)$ by Theorem~\mref{them:comb}.
Thus $H_{LR}(\X)$ is a strictly graded coalgebra. By Lemma~\mref{lem:sgc}, $H_{LR}(\X)$ has only one simple subcoalgebra. Then the result follows from that $\mathbf{k}Y_{0}(\X)=\bfk \{|\}$ is a simple subcoalgebra of $H_{LR}(\X)$.
 \end{proof}

\begin{remark}
Summing up, the coradical of the Hopf algebra $H_{LR}(\X)$ (that is to say the sum of all its simple coalgebra) is $\mathbf{k}\{|\}$.
\end{remark}

\section{Free cocycle $\vee_\X$-Hopf algebras of decorated planar binary trees}
\mlabel{sec:fch}
In this section, based on the binary grafting operations $\vee_\alpha$ on $H_{LR}(\X)$ with $\alpha\in \X$,
we introduce the concept of a $\vee$-algebra and more generally a $\vee_\X$-algebra, leading to the emergence of $\vee_\X$-bialgebras
and $\vee_\X$-Hopf algebras. Further when a $\vee$-cocycle condition is involved, cocycle $\vee_\X$-bialgebras and cocycle $\vee_\X$-Hopf algebras are also introduced. We finally show that $H_{LR}(\X)$ is a free cocycle $\vee_\X$-Hopf algebra.

\subsection{Free $\vee_\X$-algebras of decorated planar binary trees}
In this subsection, we equip the space $H_{LR}(\X)$ of planar binary trees decorated by a nonempty set $\X$ with a
free $\vee_\X$-algebra structure. Let us first recall the concept of operated algebras.

\begin{defn}\cite[Sec.~1.2]{Guo09}
\begin{enumerate}
\item
An {\bf operated algebra } is an algebra $A$ together with a (linear) operator $P: A\to A$.
\item
An {\bf $\X$-operated algebra } is an algebra $A$ together with a set of (linear) operators $P_{\alpha}: A\to A$, $\alpha\in \X$.
\end{enumerate}
\end{defn}

Motivated by the above definition and Eq.~(\mref{eq:astttt}), we introduce $\vee$-algebras.

\begin{defn}
\begin{enumerate}
\item A {\bf $\vee$-algebra} is an algebra $(A, \ast_A, 1)$ together with a binary operation $\vee:A\ot  A \to A$ such that,
for $a=a_1\vee a_2$ and $a'=a_1'\vee a_2'$ in $A $,
\begin{align*}
 a\ast_A a'=a_1 \vee (a_2\ast_{A} a')+(a\ast_A a_1')\vee a_2'.
 %
%\mlabel{eq:A}
\end{align*}
\end{enumerate}
More generally, let $\X$ be a nonempty set.
\begin{enumerate}\setcounter{enumi}{1}
\item A {\bf $\vee_\X$-algebra} is an algebra $(A, \ast_A, 1_A)$ equipped with a set of binary operations
$$\vee_\X := \{\vee_\alpha:A\ot  A \to A \mid \alpha\in \X \}$$ such that
\begin{align}
 a\ast_{A} a'=a_1 \vee_{\alpha} (a_2\ast_{A} a')+(a\ast_{A} a_1')\vee_{\alpha'} a_2',
\mlabel{eq:AA}
\end{align}
where $a=a_1 \vee_\alpha a_2$ and $a'=a_1'\vee_{\alpha'} a_2'$ in $A$ with $\alpha, \alpha'\in \X$. We denote such a $\vee_\X$-algebra by $(A, \ast_A, 1_A, \vee_\X)$.
\item
Let $(A, \ast_A, 1_A, \vee_\X)$ and $(A', \ast_{A'}, 1_{A'}, \vee'_\X)$ be two $\vee_\X$-algebras. A linear map $\phi : A\rightarrow A'$ is called a {\bf $\vee_\X$-algebra morphism} if $\phi$ is an algebra homomorphism such that $\phi \circ \vee_\alpha = \vee'_\alpha \circ (\phi\ot \phi)$ for each $\alpha\in \X$.

\item A {\bf free $\vee_\X$-algebra} on a set $X$ is a $\vee_\X$-algebra $(A, \ast_A, 1_A, \vee_\X)$ together with a set map $j:X\rightarrow A$ with the property that, for any $(A', \ast_{A'}, 1_{A'}, \vee'_\X)$ and a set map $\phi:X\rightarrow A'$, there exists a unique $\vee_\X$-algebra morphism $\bar{\phi}:A\rightarrow A'$
    such that $\bar{\phi} \circ j=\phi$.
\end{enumerate}
\mlabel{defn:veealg}
\end{defn}

\begin{remark}
Let us emphasize that $\vee$-algebras are different from operated algebras:
the $\vee$-algebra is an algebra equipped with a binary operation satisfying Eq.~(\mref{eq:AA}); but
the operated algebra is an algebra equipped with a unary operation.
\end{remark}

\begin{exam}
It follows from Eq.~(\mref{eq:astttt}) that $(H_{LR}(\X), \ast, |, \vee_\X)$ is a $\vee_\X$-algebra.
Indeed, it is a free $\vee_\X$-algebra with a universal property (see Theorem~\mref{thm:fva} below).
\end{exam}

The significant role of the binary grafting $\vee_{\X}$ is clarified by the following universal property.

\begin{theorem}
The quadruple $(H_{LR}(\X), \ast, |, \vee_{\X})$  is the free $\vee_{\X}$-algebra on the empty set,
that is, the initial object in the category of  $\vee_{\X}$-algebras.
More precisely, for any $\vee_\X$-algebra $A=(A, \ast_A, 1_A, \vee_{A, \X})$, there exists a unique $\vee_\X$-algebra morphism $\bar{\phi}: H_{LR}(\X)\rightarrow A$.
%such that $\bar{\phi}\circ \vee_{\alpha}=\vee_{A, \alpha}\circ (\bar{\phi}\otimes\bar{\phi})$ for all $\alpha\in \X$,
%that is, the following diagram is commutative.
%\begin{align*}
%\xymatrix{
%  H_{LR}(\X)\otimes {H_{LR}(\X)} \ar[d]_{\bar{\phi}\otimes\bar{\phi}} \ar[r]^{\quad \ \ \vee_{\alpha}}
%                & H_{LR}(\X) \ar[d]^{\bar{\phi}}  \\
%  A\otimes A  \ar[r]_{\vee_{A,\alpha}}
%                & A             }
%\end{align*}
%
\mlabel{thm:fva}
\end{theorem}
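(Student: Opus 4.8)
The plan is to construct the morphism $\bar\phi$ explicitly by recursion on the depth of decorated planar binary trees, then verify it is multiplicative, compatible with the $\vee_\alpha$ operations, and unique.

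\medskip

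\emph{Construction of $\bar\phi$.} Define $\bar\phi: H_{LR}(\X)\to A$ on the basis $Y_\infty(\X)$ by recursion on $\dep(T)$. Set $\bar\phi(|):=1_A$. For $T\in Y_\infty(\X)$ with $\dep(T)\geq 1$, write $T=T^l\vee_\alpha T^r$ uniquely with $T^l,T^r\in Y_\infty(\X)$ and $\alpha\in\X$, and put
\[
\bar\phi(T):=\bar\phi(T^l)\vee_{A,\alpha}\bar\phi(T^r).
\]
Since $\dep(T^l),\dep(T^r)<\dep(T)$, this is well-defined; extend $\bar\phi$ linearly to $H_{LR}(\X)$. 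Because the empty set maps trivially into $A$, the required compatibility $\bar\phi\circ j=\phi$ is vacuous, so $\bar\phi$ is the only candidate and only existence-of-a-morphism needs checking.

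\medskip

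\emph{Verification that $\bar\phi$ is a $\vee_\X$-algebra morphism.} First, $\bar\phi$ is compatible with each $\vee_\alpha$ essentially by definition: for basis elements $T,T'$ one has $T\vee_\alpha T'=T\vee_\alpha T'$ already in the correct form $(\cdot)\vee_\alpha(\cdot)$, so $\bar\phi(T\vee_\alpha T')=\bar\phi(T)\vee_{A,\alpha}\bar\phi(T')$; extend bilinearly. The substantive point is multiplicativity, $\bar\phi(T\ast T')=\bar\phi(T)\ast_A\bar\phi(T')$, which I would prove by induction on $\dep(T)+\dep(T')$. The base cases where $T=|$ or $T'=|$ are immediate from $|\ast T=T\ast|=T$ and $\bar\phi(|)=1_A$. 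For the inductive step, write $T=T^l\vee_\alpha T^r$ and $T'=T'^l\vee_\beta T'^r$; apply the recursion \eqref{eq:astttt} to expand $T\ast T'$, apply $\bar\phi$, use that $\bar\phi$ is compatible with $\vee_\alpha,\vee_\beta$, invoke the induction hypothesis on $T^r\ast T'$ and $T\ast T'^l$ (whose depth-sums are strictly smaller), and finally recognize the result as the right-hand side of the defining relation \eqref{eq:AA} for the $\vee_\X$-algebra $A$ applied to $\bar\phi(T)=\bar\phi(T^l)\vee_{A,\alpha}\bar\phi(T^r)$ and $\bar\phi(T')=\bar\phi(T'^l)\vee_{A,\beta}\bar\phi(T'^r)$. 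That $A$ is a $\vee_\X$-algebra is exactly what makes this last recognition step legal. One also records $\bar\phi(|)=1_A$ so $\bar\phi$ is a unital algebra homomorphism.

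\medskip

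\emph{Uniqueness.} If $\psi:H_{LR}(\X)\to A$ is any $\vee_\X$-algebra morphism, then $\psi(|)=1_A$ since $\psi$ is unital, and for $T=T^l\vee_\alpha T^r$ the morphism property forces $\psi(T)=\psi(T^l)\vee_{A,\alpha}\psi(T^r)$; an induction on $\dep(T)$ then gives $\psi=\bar\phi$ on the basis, hence everywhere. \textbf{The main obstacle} is the multiplicativity induction: one must be careful that \eqref{eq:astttt} and \eqref{eq:AA} are ``the same'' relation, so that after applying the induction hypothesis the two summands $\bar\phi(T^l)\vee_{A,\alpha}(\bar\phi(T^r)\ast_A\bar\phi(T'))$ and $(\bar\phi(T)\ast_A\bar\phi(T'^l))\vee_{A,\beta}\bar\phi(T'^r)$ reassemble via \eqref{eq:AA} into $\bar\phi(T)\ast_A\bar\phi(T')$ — the bookkeeping of which factor is being grafted, and the choice of induction parameter $\dep(T)+\dep(T')$ (needed because $T\ast T'^l$ need not have smaller depth than $T$ alone), are where care is required.
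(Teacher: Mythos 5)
Your proposal is correct and follows essentially the same route as the paper's proof: recursive definition of $\bar{\phi}$ on depth, compatibility with $\vee_\alpha$ by construction, multiplicativity by induction on $\dep(T)+\dep(T')$ matching Eq.~(\ref{eq:astttt}) against Eq.~(\ref{eq:AA}), and uniqueness by induction on depth. You correctly identify the two delicate points (the choice of induction parameter and the reassembly step via Eq.~(\ref{eq:AA})), which are exactly where the paper's argument does its work.
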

\begin{proof}

(Uniqueness).
Suppose that $\bar{\phi}: H_{LR}(\X) \rightarrow A$ is a $\vee_\X$-algebra morphism.
We prove the uniqueness of $\bar{\phi}(T)$ for basis elements $T\in Y_{\infty}(\X)$ by induction on $\dep(T)\geq 0$.
For the initial step of $\dep(T)=0$, we have $T = |$ and $\bar{\phi}(T) = \bar{\phi}(|) = 1_A$.
For the induction step of $\dep(T)\geq 1$, we may write $T = T_1\vee_{\alpha} T_2$ for some $\alpha\in \X$ and then
\begin{align*}
\bar{\phi}(T) =&\, \bar{\phi}(T_1\vee_{\alpha} T_2) =  \bar{\phi}\circ \vee_\alpha(T_1,T_2) = \vee_{A,\alpha}\circ (\bar{\phi}\otimes\bar{\phi})(T_1, T_2) \\
 =& \vee_{A, \alpha}\bigl(\bar{\phi}(T_1)\otimes\bar{\phi}(T_2)\bigr )=\bar{\phi}(T_1)\vee_{A, \alpha}\bar{\phi}(T_2).
\end{align*}
Here $\bar{\phi}(T_1)$ and $\bar{\phi}(T_2)$ are determined uniquely by the induction hypothesis and so $\bar{\phi}(T)$ is unique.

(Existence). Define a linear map ${\bar{\phi}}: H_{LR}(\X) \rightarrow A$ recursively on depth $\dep(T)$ for $T\in  Y_{\infty}(\X)$
by assigning
\begin{align}
\bar{\phi}(|) :=1_A\,\text{ and }\,  \bar{\phi}(T) :=\bar{\phi}(T_1\vee_\alpha T_2):=\bar{\phi}(T_1)\vee_{A,\alpha}\bar{\phi}(T_2),
\mlabel{eq:mun}
\end{align}
where $T=T_1\vee_\alpha T_2$ for some $T_1, T_2\in Y_\infty(\X)$ and $\alpha\in \X$.
Then for any $T_1, T_2\in Y_{\infty}(\X)$, we have
\begin{align*}
\bar{\phi}\circ \vee_\alpha(T_1,T_2)=\bar{\phi}(T_1\vee_\alpha T_2)=\bar{\phi}(T_1)\vee_{A, \alpha}\bar{\phi}(T_2)=\vee_{A, \alpha}\bigl(\bar{\phi}(T_1)\otimes\bar{\phi}(T_2) \bigr) = \vee_{A, \alpha} \circ (\bar{\phi}\ot \bar{\phi})(T_1, T_2)
\end{align*}
and so
\begin{align*}
\bar{\phi}\circ \vee_\alpha =  \vee_{A, \alpha}\circ (\bar{\phi}\ot \bar{\phi}).
\end{align*}
We are left to check that
\begin{align}
\bar{\phi} (T\ast T')=\bar{\phi}(T)\ast_{A}\bar{\phi}(T')\,\text{ for any }\, T, T'\in Y_{\infty}(\X).
\mlabel{eq:hom}
\end{align}
We proceed to prove Eq.~(\mref{eq:hom}) by induction on the sum of depths $\dep(T)+ \dep(T')\geq 0$.
For the initial step of  $\dep(T)+ \dep(T') = 0$, we have $T = T'=|$ and
\begin{align*}
\bar{\phi} (T\ast T') =\bar{\phi}(|\ast |)=\bar{\phi}(|)=1_A=1_A\ast_{A} 1_A=\bar{\phi}(|)\ast_{A}\bar{\phi}(|) = \bar{\phi}(T) \ast_A \bar{\phi}(T').
 \end{align*}
For the induction step of $\dep(T)+ \dep(T') \geq 1$, if $\dep(T) = 0$ or $\dep(T') = 0$, without loss of generality, letting
$\dep(T) = 0$, then $T=|$ and
 \begin{align*}
\bar{\phi} (T\ast T') =\bar{\phi}(|\ast T')=\bar{\phi}(T')= 1_A \ast_A\bar{\phi}(T')= \bar{\phi}(|) \ast_A \bar{\phi}(T') = \bar{\phi}(T) \ast_A \bar{\phi}(T').
 \end{align*}
So we may assume that $\dep(T), \dep(T')\geq 1$ and write
\begin{align*}
T = T_1 \vee_\alpha T_2 \text{\ and\ } T' = T'_1 \vee_\beta T'_2\,\text{ for some }\, \alpha,\beta\in \X.
\end{align*}
Hence
\allowdisplaybreaks{
\begin{align*}
\bar{\phi}(T\ast T')&=\bar{\phi}\bigl(T_1\vee_\alpha (T_2\ast T')+(T\ast T_1')\vee_\beta T_2'\bigr)\quad  (\text{by Eq.~(\mref{eq:astttt})} )\\
&=\bar{\phi}\bigl(T_1\vee_\alpha (T_2\ast T')\bigr)+\bar{\phi}\bigl((T\ast T_1')\vee_\beta T_2'\bigr)\quad (\text{by $\bar{\phi}$ being linear}) \\
&=\bar{\phi}(T_1)\vee_{A,\alpha} \bar{\phi}(T_2\ast T')+\bar{\phi}(T\ast T_1')\vee_{A,\beta}\bar{\phi}(T_2') \quad (\text{by Eq.~(\mref{eq:mun})} )\\
&=\bar{\phi}(T_1)\vee_{A,\alpha}\bigl(\bar{\phi}(T_2)\ast_{A}\bar{\phi}( T')\bigr)+\bigl(\bar{\phi}(T)\ast_{A}\bar{\phi}( T_1')\bigr)\vee_{A,\beta} \bar{\phi}(T_2')\\
&\hspace{8cm} (\text{by\ the\ induction\ hypothesis})\\
&=\bar{\phi}(T_1)\vee_{A,\alpha}\Bigl(\bar{\phi}(T_2)\ast_{A} \bigl(\bar{\phi}( T_1')\vee_{A,\beta} \bar{\phi}(T_2')\bigr)\Bigr)+
\Bigl( \bigl(\bar{\phi}(T_1)\vee_{A,\alpha}\bar{\phi}(T_2) \bigr) \ast_{A}\bar{\phi}( T_1')\Bigr)\vee_{A,\beta} \bar{\phi}(T_2')\\
& \hspace{8cm}    \text{(by Eq.~(\mref{eq:mun}))}\\
&=\bigl(\bar{\phi}(T_1)\vee_{A,\alpha}\bar{\phi}(T_2)\bigr)\ast_{A}\bigl(\bar{\phi}( T_1')\vee_{A,\beta} \bar{\phi}(T_2')\bigr)\quad (\text{by\  Eq.~(\mref{eq:AA})} ) \\
&=\bar{\phi}(T_1\vee_{\alpha} T_2)\ast_{A}\bar{\phi}( T_1'\vee_{\beta} T_2')\quad (\text{by\  Eq.~(\mref{eq:mun})} )\\
&=\bar{\phi}(T)\ast_{A}\bar{\phi}(T'),
\end{align*}
}
as required.
This completes the proof.
\end{proof}

\subsection{Free cocycle $\vee_\X$-Hopf algebras of decorated planar binary trees}
In this subsection, we prove that $H_{LR}(\X)$ is the free cocycle $\vee_\X$-Hopf algebra on the empty set.
Let us first pose the following concepts which are motivated from the binary grafting operations $\{\vee_\alpha\mid \alpha\in \X\}$
characterized in Eq.~(\mref{eq:astttt}) and the coproduct $\Delta_{LR(\X)}$ given in Eq.~(\mref{eq:dlr22}).

\begin{defn}
\begin{enumerate}
\item A {\bf $\vee_\X$-bialgebra} (resp.~{\bf $\vee_\X$-Hopf algebra}) is a bialgebra (resp. Hopf algebra) $(H, \ast_H, 1_H, \Delta_H, \varepsilon_H )$ which is also a $\vee_\X$-algebra $(H,\ast_H, 1_H,\vee_{\X})$.

 \item Let $(H,\, \vee_{\X})$ and $(H',\,\vee'_{\X})$ be two $\vee_\X$-bialgebras (resp.~$\vee_\X$-Hopf algebras).
A linear map $\phi : H\rightarrow H'$ is called a {\bf $\vee_\X$-bialgebra morphism} (resp.~{\bf $\vee_\X$-Hopf algebra morphism})
if $\phi$ is a bialgebra (resp. Hopf algebra) morphism such that ${\phi}\circ\vee_{\alpha} = \vee'_{\alpha}\circ ({\phi}\ot {\phi})$ for $\alpha\in \X$.
\end{enumerate}
\mlabel{defn:veehopf0}
\end{defn}

Involved with an analogy of the Hochschild 1-cocycle condition~\mcite{Fo3}, we pose

\begin{defn}
\begin{enumerate}
\item An {\bf $\X$-cocycle $\vee_\X$-bialgebra} or simply a {\bf cocycle $\vee_\X$-bialgebra} is a $\vee_\X$-bialgebra $(H, \ast_H, 1_H, \Delta_H, \varepsilon_H, \vee_\X)$ satisfying the following {\bf $\vee$-cocycle condition:} for any $\alpha\in \X$ and $h,h'\in H,$
\begin{align}
\Delta_H (h \vee_\alpha h') = (h \vee_\alpha h') \otimes 1_H +(\ast_H, \vee_\alpha)\bigl(\Delta_H(h)\otimes \Delta_H(h')\bigr),
\mlabel{eq:cocycle}
\end{align}
where $(\ast_H,\vee_\alpha):=(\ast_H \otimes \vee_H)\circ {\tau_{23}}$ and $\tau_{23}$ is the permutation of the second and third tensor factor. If the bialgebra in a cocycle $\vee_\X$-bialgebra is a Hopf algebra, then it is called a {\bf cocycle $\vee_\X$-Hopf algebra}.

\item A {\bf free cocycle $\vee_\X$-bialgebra} on a set $X$ is a cocycle $\vee_\X$-bialgebra $(H, \ast_H, 1_H,\Delta_H,\varepsilon_H, \vee_\X)$ together with a set map $j: X\rightarrow H$ with the property that, for any cocycle $\vee_\X$-bialgebra $(H', \ast_{H'}, 1_{H'}, \Delta_{H'}, \varepsilon_{H'}, \vee'_{\X})$ and any set map $\phi: X \rightarrow H'$, there exists a unique $\vee_\X$-bialgebra morphism $\bar{\phi}:H \rightarrow H'$
    such that $\bar{\phi} \circ j =\phi$.
The concept of {\bf a free cocycle $\vee_\X$-Hopf algebra} is defined in the same way.
\end{enumerate}
\mlabel{defn:veehopf}
\end{defn}
When $\X$ is a singleton set, the subscript $\X$ in Definitions~\mref{defn:veealg} and~\mref{defn:veehopf} will be suppressed for simplicity.

The following result gives a family of coideals of a cocycle $\vee_\X$-bialgebra.
Recall that a submodule $I$ in a coalgebra $(C, \Delta, \varepsilon)$ is called a {\bf coideal}
if $I\subseteq \ker \varepsilon$ and  $\Delta(I) \subseteq I\ot C + C\ot I$.  A {\bf biideal} of a bialgebra $A$ is a submodule of $A$ which is both an ideal and a coideal of $A$.

\begin{prop}
Let $(H, \ast_{H}, 1_H, \Delta_{H}, \varepsilon_H, \vee_{\X})$
be a cocycle $\vee_\X$-bialgebra and $C$ a coideal of $H$. Then, we have the following.
\begin{enumerate}
\item \mlabel{it:ida}
$H\vee_{\alpha} H := \{h_1 \vee_\alpha h_2 \mid h_1, h_2\in H\}$ is a coideal of $H$ for each $\alpha\in \X$.

\item \mlabel{it:idb}
The ideal generated by $C$ is a biideal.
\end{enumerate}\mlabel{pp:coideal}
\end{prop}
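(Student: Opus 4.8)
\textbf{Proof proposal for Proposition~\mref{pp:coideal}.}

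The plan is to verify the two defining conditions of a coideal directly from the $\vee$-cocycle condition Eq.~(\mref{eq:cocycle}) for part~(\mref{it:ida}), and then to bootstrap from part~(\mref{it:ida}) together with a standard bialgebra argument for part~(\mref{it:idb}). For part~(\mref{it:ida}), set $I := H\vee_\alpha H$. First I would check $I\subseteq \ker\varepsilon_H$: applying $\varepsilon_H \ot \varepsilon_H$ to Eq.~(\mref{eq:cocycle}) and using that $\varepsilon_H$ is a counit, one gets $\varepsilon_H(h\vee_\alpha h') = \varepsilon_H(h\vee_\alpha h')\varepsilon_H(1_H) + \varepsilon_H(h)\varepsilon_H(h') $ --- wait, more carefully: $(\varepsilon_H\ot\varepsilon_H)\Delta_H(h\vee_\alpha h') = \varepsilon_H(h\vee_\alpha h')$, while the right-hand side under $\varepsilon_H\ot\varepsilon_H$ gives $\varepsilon_H(h\vee_\alpha h')\cdot 1 + \varepsilon_H\big((\ast_H\ot\vee_\alpha)\circ\tau_{23}(\Delta_H(h)\ot\Delta_H(h'))\big)$. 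Since the second tensor slot after applying $(\ast_H\ot\vee_\alpha)$ contains a $\vee_\alpha$, and $\varepsilon_H$ kills $\vee_\alpha$-elements (this is exactly what we are in the middle of proving, so instead I will argue by induction on a grading, or simply invoke that in the cocycle condition one reads off $(\varepsilon_H\ot\id)\Delta_H(h\vee_\alpha h') = h\vee_\alpha h'$ forces, via counitality on the left, $\varepsilon_H(h\vee_\alpha h')=0$). The cleanest route: apply $\id\ot\varepsilon_H$ to Eq.~(\mref{eq:cocycle}); the left side is $h\vee_\alpha h'$ by counitality, and the right side is $(h\vee_\alpha h')\varepsilon_H(1_H) + (\text{terms})$, and matching forces the extra terms to vanish, which upon applying $\varepsilon_H$ once more yields $\varepsilon_H(h\vee_\alpha h')=0$. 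I will present whichever of these is cleanest; the point is it is a short counit manipulation.

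Next, for the coideal condition $\Delta_H(I)\subseteq I\ot H + H\ot I$: look again at Eq.~(\mref{eq:cocycle}). The first summand $(h\vee_\alpha h')\ot 1_H$ clearly lies in $I\ot H$. For the second summand $(\ast_H,\vee_\alpha)(\Delta_H(h)\ot\Delta_H(h'))$, writing $\Delta_H(h) = \sum h_{(1)}\ot h_{(2)}$ and $\Delta_H(h') = \sum h'_{(1)}\ot h'_{(2)}$, after applying $\tau_{23}$ and then $\ast_H\ot\vee_\alpha$ we obtain $\sum (h_{(1)}\ast_H h'_{(1)})\ot (h_{(2)}\vee_\alpha h'_{(2)})$. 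Every term here has second tensor factor of the form $h_{(2)}\vee_\alpha h'_{(2)}\in H\vee_\alpha H = I$, so the whole second summand lies in $H\ot I$. Hence $\Delta_H(h\vee_\alpha h')\in I\ot H + H\ot I$, as needed. This is the core computation and it is genuinely short; the only subtlety is being careful with the $\tau_{23}$ bookkeeping, which I would spell out in one displayed line.

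For part~(\mref{it:idb}), let $J$ be the two-sided ideal of $H$ generated by the coideal $C$. I would argue $J$ is a coideal by the classical bialgebra lemma: since $C\subseteq\ker\varepsilon_H$ and $\ker\varepsilon_H$ is an ideal (being the kernel of an algebra map $\varepsilon_H\colon H\to\bfk$), we get $J\subseteq\ker\varepsilon_H$. For $\Delta_H(J)\subseteq J\ot H + H\ot J$: a general element of $J$ is a sum of terms $a c b$ with $a,b\in H$, $c\in C$; since $\Delta_H$ is an algebra morphism, $\Delta_H(acb) = \Delta_H(a)\Delta_H(c)\Delta_H(b)$, and using $\Delta_H(c)\in C\ot H + H\ot C$ together with the fact that $J\ot H$ and $H\ot J$ are both stable under left and right multiplication by $\Delta_H(a)$ and $\Delta_H(b)$ (because $J$ is a two-sided ideal), we conclude $\Delta_H(acb)\in (C\ot H + H\ot C)$ multiplied into $J\ot H + H\ot J$, hence in $J\ot H + H\ot J$. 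Therefore $J$ is simultaneously an ideal and a coideal, i.e.\ a biideal.

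The main obstacle, such as it is, lies in part~(\mref{it:ida}): one must be slightly careful establishing $H\vee_\alpha H\subseteq\ker\varepsilon_H$ without circularity, and careful with the permutation $\tau_{23}$ in tracking which tensor slot ends up carrying the $\vee_\alpha$. Part~(\mref{it:idb}) is entirely standard and uses only that $\varepsilon_H$ is an algebra map and $\Delta_H$ a multiplicative map, together with the ideal property of $J$; it does not even use the $\vee_\X$-structure.
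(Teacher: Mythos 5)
Your proposal is correct and follows essentially the same route as the paper: part~(\mref{it:ida}) is proved by the same counit manipulation on the $\vee$-cocycle identity (the paper applies $\varepsilon_H\ot\id$ and matches against counitality, which is the variant you correctly single out as non-circular) together with reading off $\sum (h_{(1)}\ast_H h'_{(1)})\ot (h_{(2)}\vee_\alpha h'_{(2)})\in H\ot(H\vee_\alpha H)$, and part~(\mref{it:idb}) is the same multiplicativity-of-$\Delta_H$ computation on $H\ast_H C\ast_H H$. Your extra check that the generated ideal lies in $\ker\varepsilon_H$ is a harmless (indeed slightly more complete) addition that the paper leaves implicit.
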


\begin{proof}
(\mref{it:ida})
Let $\alpha\in \X$. We first show $H\vee_\alpha H\subseteq \ker \varepsilon_H$.
Let
$$h =h_1\vee_{\alpha}h_2 \in H\vee_\alpha H \,\text{ with }\, h_1, h_2 \in H.$$
Using Sweedler notation, we can write
\begin{align*}
\Delta_H(h_1)=\sum_{(h_1)}h_{1(1)}\otimes h_{1(2)}\, \text{ and }\, \Delta_H(h_2)=\sum_{ (h_2)}h_{2(1)}\otimes h_{2(2)}.
\end{align*}
Then
\allowdisplaybreaks{
\begin{align*}
h_1\vee_{\alpha}h_2=&\ (\varepsilon_H \ot \id)\circ \Delta_{H}(h_1\vee_{\alpha}h_2)\quad\quad\quad (\text{by the counicity})\\
=&\ (\varepsilon_H \ot \id)\Big((h_1 \vee_\alpha h_2) \otimes 1_H +(\ast_H, \vee_\alpha)\bigl(\Delta_H(h_1)\otimes \Delta_H(h_2)\bigr)\Big)\quad \text{(by\ Eq.~(\mref{eq:cocycle}))}\\
=&\ \varepsilon_H (h_1 \vee_\alpha h_2) \ot 1_H+(\varepsilon_H \ot \id)\circ(\ast_H, \vee_\alpha)\left(\bigg(\sum_{(h_1)}h_{1(1)}\otimes h_{1(2)}\bigg)\otimes \bigg(\sum_{ (h_2)}h_{2(1)}\otimes h_{2(2)}\bigg)\right)\\
=&\ \varepsilon_H (h_1 \vee_\alpha h_2) \ot 1_H+(\varepsilon_H \ot \id)\circ(\ast_H, \vee_\alpha)\left(\sum_{(h_1), (h_2)}h_{1(1)}\otimes h_{1(2)}\otimes h_{2(1)}\otimes h_{2(2)}\right)\\
=&\ \varepsilon_H (h_1 \vee_\alpha h_2) \ot 1_H+(\varepsilon_H \ot \id)\left( \sum_{(h_1), (h_2)}(h_{1(1)}\ast_{H} h_{2(1)})\otimes (h_{1(2)}\vee_{\alpha}h_{2(2)}) \right)\\
&\hspace{5cm} (\text{by\ } (\ast_H,\vee_\alpha):=(\ast_H \otimes \vee_\alpha)\circ {\tau_{23}})\\
=&\ \varepsilon_H (h_1 \vee_\alpha h_2) \ot 1_H+ \sum_{(h_1), (h_2)}\varepsilon_H(h_{1(1)}\ast_{H} h_{2(1)})\otimes (h_{1(2)}\vee_{\alpha}h_{2(2)}) \\
=&\ \varepsilon_H (h_1 \vee_\alpha h_2) \ot 1_H+ \sum_{(h_1), (h_2)}\Big(\varepsilon_H(h_{1(1)})\ast_{\bfk} \varepsilon_H(h_{2(1)})\Big) (h_{1(2)}\vee_{\alpha}h_{2(2)}) \\
=&\ \varepsilon_H (h_1 \vee_\alpha h_2) \ot 1_H+ \sum_{(h_1), (h_2)}\Big(\varepsilon_H(h_{1(1)})\ast_{\bfk} h_{1(2)}\Big) \vee_{\alpha}  \Big( \varepsilon_H(h_{2(1)}) \ast_{\bfk} h_{2(2)}\Big) \\
=&\ \varepsilon_H (h_1 \vee_\alpha h_2) \ot 1_H+h_1 \vee_{\alpha}  h_2\quad (\text{by the counicity}),
\end{align*}
}
which implies
$$\varepsilon_H (h_1 \vee_\alpha h_2)\ot 1_H=0\,\text{ and so }\, \varepsilon_H (h_1 \vee_\alpha h_2)=0.$$
We next show
$$H\vee_\alpha H\subseteq  (H\vee_\alpha H)\ot H + H\ot (H\vee_\alpha H).$$
Indeed, for any $h_1\vee_\alpha h_2\in H\vee_\alpha H$,
\begin{align*}
\Delta_{H}(h_1\vee_{\alpha}h_2)&=(h_1\vee_{\alpha}h_2)\otimes 1_{H}+(\ast_H, \vee_{\alpha})\bigl(\Delta_H(h_1)\otimes\Delta_H(h_2)) \quad \text{ (by Eq.~(\mref{eq:cocycle}))}\\
&= (h_1\vee_{\alpha}h_2)\otimes 1_{H}+(\ast_H, \vee_{\alpha}) \bigg( \sum_{(h_1), (h_2)}(h_{1(1)}\otimes h_{1(2)})\otimes (h_{2(1)}\otimes h_{2(2)}) \bigg)\\
&= (h_1\vee_{\alpha}h_2) \otimes 1_{H}+\sum_{(h_1), (h_2)}(h_{1(1)}\ast_H h_{2(1)})\otimes (h_{1(2)}\vee_{\alpha}h_{2(2)})\\
%&= (h_1\vee_{H}h_2) \otimes 1_{H}+\sum_{(h_1), (h_2)}(h_{1(1)}\ast_H h_{2(1)})\otimes \vee_{H}(h_{1(2)},h_{2(2)})\\
&\in (H\vee_\alpha H) \otimes H+ H\otimes (H\vee_\alpha H).
\end{align*}
Thus $H\vee_\alpha H$ is a coideal.

(\mref{it:idb}) Suppose that $I$ is the ideal generated by $C$. Then we can write $I=H\ast_{H}C\ast_{H}H$ and so
\begin{align*}
\Delta_{H}(I)&=\Delta_{H}(H\ast_{H}C\ast_{H}H) = \Delta_{H}(H)\ast_{H}\Delta_{H}(C)\ast_{H}\Delta_{H}(H)\\
&\subseteq (H\ot H)\ast_{H}(C\ot H+H\ot C)\ast_{H}(H\ot H)\\
&= (H\ast_{H}C\ast_{H}H) \ot (H \ast H\ast H) + (H\ast H\ast H) \ot (H\ast_{H}C\ast_{H}H)\\
&=I\ot H+H\ot I.
\end{align*}
Thus $I$ is a biideal of $H$.
\end{proof}

As a consequence of Proposition~\mref{pp:coideal}~(\mref{it:ida}), we obtain a family of coideals of $H_{LR}(\X)$.

\begin{coro}
The  $H_{LR}(\X) \vee_{\alpha} H_{LR}(\X)$ is a coideal of $H_{LR}(\X)$ for each $\alpha\in \X$.
\end{coro}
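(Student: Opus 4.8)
The plan is to read the statement as a direct specialization of Proposition~\mref{pp:coideal}~(\mref{it:ida}), so the only real work is to check that $H_{LR}(\X)$, equipped with the structure already introduced, is a cocycle $\vee_\X$-bialgebra. First I would collect the three ingredients. By Lemma~\mref{lem:hopf1}, $(H_{LR}(\X), \ast, |, \Delta_{LR(\X)}, \varepsilon_{LR(\X)})$ is a bialgebra. By Eq.~(\mref{eq:astttt}) the triple $(H_{LR}(\X), \ast, |)$ together with the family $\vee_\X=\{\vee_\alpha\mid\alpha\in\X\}$ satisfies the defining identity~(\mref{eq:AA}), hence is a $\vee_\X$-algebra; combining these, $(H_{LR}(\X),\vee_\X)$ is a $\vee_\X$-bialgebra in the sense of Definition~\mref{defn:veehopf0}.

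Next I would verify the $\vee$-cocycle condition~(\mref{eq:cocycle}) for $H:=H_{LR}(\X)$. The recursive definition~(\mref{eq:dlr22}) says exactly that
\[
\Delta_{LR(\X)}(T^l\vee_\alpha T^r)=(T^l\vee_\alpha T^r)\otimes | +(\ast,\vee_\alpha)\bigl(\Delta_{LR(\X)}(T^l)\otimes\Delta_{LR(\X)}(T^r)\bigr)
\]
for all basis trees $T^l,T^r\in Y_\infty(\X)$ and all $\alpha\in\X$. Since each $\vee_\alpha$ is $\bfk$-bilinear, $\Delta_{LR(\X)}$ is $\bfk$-linear, and $(\ast,\vee_\alpha)=(\ast\otimes\vee_\alpha)\circ\tau_{23}$ is bilinear, this identity propagates from the basis $Y_\infty(\X)\times Y_\infty(\X)$ to arbitrary $h,h'\in H_{LR}(\X)$ by expanding both sides in the basis and matching terms. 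Hence~(\mref{eq:cocycle}) holds on all of $H_{LR}(\X)$, so $(H_{LR}(\X), \ast, |, \Delta_{LR(\X)}, \varepsilon_{LR(\X)}, \vee_\X)$ is a cocycle $\vee_\X$-bialgebra.

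Finally I would invoke Proposition~\mref{pp:coideal}~(\mref{it:ida}) with this $H$, which immediately gives that $H_{LR}(\X)\vee_\alpha H_{LR}(\X)$ is a coideal of $H_{LR}(\X)$ for every $\alpha\in\X$. I do not anticipate any genuine obstacle: the sole non-formal point is the passage of~(\mref{eq:dlr22}) from basis elements to all elements, and that is the routine bilinearity argument sketched above. (One could alternatively quote the stronger fact that $H_{LR}(\X)$ is the free cocycle $\vee_\X$-bialgebra, but restricting to Lemma~\mref{lem:hopf1}, Eq.~(\mref{eq:astttt}) and Eq.~(\mref{eq:dlr22}) keeps the proof self-contained.)
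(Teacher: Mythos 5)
Your proposal is correct and follows the paper's own route: the paper likewise proves this corollary by directly invoking Proposition~\mref{pp:coideal}~(\mref{it:ida}), with the verification that $(H_{LR}(\X), \ast, |, \Delta_{LR(\X)}, \varepsilon_{LR(\X)}, \vee_\X)$ is a cocycle $\vee_\X$-bialgebra supplied (via Lemma~\mref{lem:hopf1}, Eq.~(\mref{eq:astttt}) and Eq.~(\mref{eq:dlr22})) in the proof of Theorem~\mref{thm:conclude}. Your extra care in extending Eq.~(\mref{eq:dlr22}) from basis trees to all of $H_{LR}(\X)$ by bilinearity is a point the paper leaves implicit, but it is the same argument.
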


\begin{proof}
It follows from Proposition~\mref{pp:coideal}~(\mref{it:ida}).
\end{proof}

Now we are ready for our main result of this section.
\begin{theorem}
Let $\X$ be a nonempty set.
\begin{enumerate}
\item The sextuple $(H_{LR}(\X), \ast, |, \Delta_{LR(\X)}, \varepsilon_{LR(\X)}, \vee_\X)$
is the free cocycle $\vee_\X$-bialgebra on the empty set, that is, the initial object in the category of cocycle $\vee_\X$-bialgebras. \mlabel{it:concludeb}

\item The sextuple $(H_{LR}(\X), \ast, |, \Delta_{LR(\X)}, \varepsilon_{LR(\X)}, \vee_\X)$ is the free cocycle $\vee_\X$-Hopf algebra on the empty set,
, that is, the initial object in the category of cocycle $\vee_\X$-Hopf algebras. \mlabel{it:concludec}
\end{enumerate}
\mlabel{thm:conclude}
\end{theorem}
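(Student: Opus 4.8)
The plan is to bootstrap from the free $\vee_\X$-algebra property of Theorem~\mref{thm:fva}. First I would record that the sextuple in question genuinely lives in both categories, i.e.\ that it is a cocycle $\vee_\X$-Hopf algebra: it is a connected graded bialgebra, hence a Hopf algebra, by Lemma~\mref{lem:hopf1}; it is a $\vee_\X$-algebra by Theorem~\mref{thm:fva}; and the $\vee$-cocycle condition~(\mref{eq:cocycle}) holds on the basis $Y_\infty(\X)$ by the defining recursion~(\mref{eq:dlr22}) of $\Delta_{LR(\X)}$, hence on all of $H_{LR}(\X)$, since both sides of~(\mref{eq:cocycle}) are bilinear in $(h,h')$.

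For~(\mref{it:concludeb}), let $(H',\ast_{H'},1_{H'},\Delta_{H'},\varepsilon_{H'},\vee'_\X)$ be an arbitrary cocycle $\vee_\X$-bialgebra. Theorem~\mref{thm:fva} already furnishes a unique $\vee_\X$-algebra morphism $\bar{\phi}\colon H_{LR}(\X)\to H'$, and since a $\vee_\X$-bialgebra morphism is in particular a $\vee_\X$-algebra morphism, uniqueness in the category of cocycle $\vee_\X$-bialgebras comes for free; the content is to check that $\bar{\phi}$ respects the coalgebra structure. Counit compatibility is immediate: $\bar{\phi}(|)=1_{H'}$ forces $\varepsilon_{H'}(\bar{\phi}(|))=1_\bfk=\varepsilon_{LR(\X)}(|)$, while for a basis tree $T=T_1\vee_\alpha T_2\neq|$ one has $\bar{\phi}(T)=\bar{\phi}(T_1)\vee'_\alpha\bar{\phi}(T_2)\in H'\vee'_\alpha H'\subseteq\ker\varepsilon_{H'}$ by Proposition~\mref{pp:coideal}~(\mref{it:ida}), matching $\varepsilon_{LR(\X)}(T)=0$. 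Comultiplicativity $(\bar{\phi}\ot\bar{\phi})\circ\Delta_{LR(\X)}=\Delta_{H'}\circ\bar{\phi}$ I would establish by induction on $\dep(T)$ for basis elements: the case $T=|$ is the trivial identity $1_{H'}\ot 1_{H'}=1_{H'}\ot 1_{H'}$; for $T=T_1\vee_\alpha T_2$, apply $\bar{\phi}\ot\bar{\phi}$ to~(\mref{eq:dlr22}), use that $\bar{\phi}$ intertwines $\ast$ with $\ast_{H'}$ and each $\vee_\alpha$ with $\vee'_\alpha$ — so that $\bar{\phi}\ot\bar{\phi}$ intertwines $(\ast,\vee_\alpha)$ with $(\ast_{H'},\vee'_\alpha)$, the slot-permutation $\tau_{23}$ being natural — insert the induction hypothesis for $T_1$ and $T_2$, and recognize the outcome as the right-hand side of~(\mref{eq:cocycle}) in $H'$ for $h=\bar{\phi}(T_1)$, $h'=\bar{\phi}(T_2)$, namely $\Delta_{H'}(\bar{\phi}(T_1)\vee'_\alpha\bar{\phi}(T_2))=\Delta_{H'}(\bar{\phi}(T))$. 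Extending by linearity, $\bar{\phi}$ is a bialgebra morphism commuting with all $\vee_\alpha$, i.e.\ a $\vee_\X$-bialgebra morphism, which finishes~(\mref{it:concludeb}).

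For~(\mref{it:concludec}), a cocycle $\vee_\X$-Hopf algebra $H'$ is in particular a cocycle $\vee_\X$-bialgebra, so~(\mref{it:concludeb}) yields a unique $\vee_\X$-bialgebra morphism $\bar{\phi}\colon H_{LR}(\X)\to H'$; a bialgebra morphism between Hopf algebras automatically commutes with the antipodes, so $\bar{\phi}$ is the unique $\vee_\X$-Hopf algebra morphism, as required.

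I expect the only real friction to be bookkeeping in the inductive step for comultiplicativity: one manipulates a four-fold tensor product together with $\tau_{23}$, so care is needed when commuting $\bar{\phi}\ot\bar{\phi}$ past $(\ast,\vee_\alpha)$; once that naturality is phrased cleanly, the identification with the $\vee$-cocycle condition is purely formal. The one conceptual point worth stating explicitly is that the recursively defined identity~(\mref{eq:dlr22}) does upgrade to the full bilinear statement~(\mref{eq:cocycle}), so that $H_{LR}(\X)$ really is an object of the categories under consideration.
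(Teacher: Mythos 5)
Your proposal is correct and follows essentially the same route as the paper: establish that $H_{LR}(\X)$ is a cocycle $\vee_\X$-bialgebra via Lemma~\mref{lem:hopf1} and Eqs.~(\mref{eq:astttt}), (\mref{eq:dlr22}); invoke Theorem~\mref{thm:fva} for the unique $\vee_\X$-algebra morphism $\bar{\phi}$; verify comultiplicativity by induction on depth using the $\vee$-cocycle condition in the target; handle the counit via Proposition~\mref{pp:coideal}~(\mref{it:ida}); and deduce the Hopf-algebra statement from the automatic compatibility of bialgebra morphisms with antipodes. The only cosmetic difference is that you phrase the inductive step through naturality of $\tau_{23}$ rather than writing out the Sweedler-notation computation, which is the same argument.
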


\begin{proof}

(\mref{it:concludeb})
It follows from Lemma~\mref{lem:hopf1} that $(H_{LR}(\X), \ast, |, \Delta_{LR(\X)}, \varepsilon_{LR(\X)})$ is a bialgebra.
Furthermore, the $(H_{LR}(\X), \ast, |, \Delta_{LR(\X)}, \varepsilon_{LR(\X)}, \vee_\X)$ is a $\vee_\X$-bialgebra by Eq.~(\mref{eq:astttt})
and a cocycle $\vee_\X$-bialgebra by Eq.~(\mref{eq:dlr22}).

We are left to show the freeness of $H_{LR}(\X)$. For this, let $(H, \ast_{H}, 1_H, \Delta_{H}, \varepsilon_H, \vee'_{\X})$
be an arbitrary cocycle $\vee_\X$-bialgebra.
In particular, $(H, \ast_H, 1_H, \vee'_\X)$ is a $\vee_\X$-algebra. So by Theorem~\mref{thm:fva},
there exists a unique algebra homomorphism $\bar{\phi}: H_{LR}(\X)\rightarrow H$ such that
\begin{equation}
\bar{\phi}\circ \vee_\alpha =  \vee'_\alpha \circ (\bar{\phi}\ot \bar{\phi}) \text{ for any } \alpha\in \X.
\mlabel{eq:phivee}
\end{equation}
It remains to check the following two points:
\begin{align}
\Delta_{H} \circ \bar{\phi} (T) &=(\bar{\phi} \otimes \bar{\phi}) \circ \Delta_{LR(\X)}(T), \mlabel{eq:phic}\\
\varepsilon_H \circ \bar{\phi}(T) &= \varepsilon_{LR(\X)}(T) \, \text{ for\ all } \, T\in Y_\infty(\X). \mlabel{eq:phiv}
\end{align}
We prove Eq.~(\mref{eq:phic}) by induction on $\dep(T)\geq 0$. For the initial step of $\mathrm{dep}(T)=0$, we have $T=|$ and
\begin{align*}
\Delta_{H} \circ \bar{\phi} (T) &= \Delta_{H} \circ \bar{\phi} (|)=\Delta_{H} (1_H)=1_H\otimes 1_H=\bar{\phi} (|)\otimes\bar{\phi} (|)\\
&=(\bar{\phi}\otimes\bar{\phi})(|\otimes |)= (\bar{\phi}\otimes\bar{\phi})\circ\Delta_{LR(\X)}(|) = (\bar{\phi}\otimes\bar{\phi})\circ\Delta_{LR(\X)}(T).
\end{align*}
For the induction step of $\dep(T)\geq 1$, we may write $T = T_1\vee_\alpha T_2$ for some $T_1, T_2\in Y_{\infty}(\X)$ and $\alpha\in \X$. Using the Sweedler notaion,
\begin{align}
\Delta_{LR(\X)}(T_1)=\sum_{(T_1)}T_{1(1)}\otimes T_{1(2)} \text{\ and \ } \Delta_{LR(\X)}(T_2)=\sum_{(T_2)}T_{2(1)}\otimes T_{2(2)}.
\mlabel{eq:dec.}
\end{align}
Then
\allowdisplaybreaks{
\begin{align*}
&\Delta_{H} \circ \bar{\phi}(T)\\
&=\Delta_{H} \circ \bar{\phi}(T_1\vee_\alpha T_2)=\Delta_{H} \bigl(\bar{\phi}(T_1)\vee'_{\alpha} \bar{\phi}(T_2)\bigr) \quad \text{(by\  Eq.~(\mref{eq:phivee}))}\\
&=\big(\bar{\phi}(T_1)\vee'_{\alpha} \bar{\phi}(T_2)\big)\otimes 1_{H}+(\ast_{H},\vee'_{\alpha})\Bigl(\Delta_{H}\bigl(\bar{\phi}(T_1)\bigr)\otimes\Delta_{H}\bigl(\bar{\phi}(T_2)\bigr)\Bigr)\quad \text{(by\  Eq.~(\mref{eq:cocycle}))}\\
&=\bar{\phi}(T_1\vee_\alpha T_2)\otimes 1_{H}+(\ast_{H},\vee'_{\alpha})\bigg(\Bigl((\bar{\phi}\otimes \bar{\phi})\Delta_{LR(\X)}(T_1)\Bigr)\otimes \Bigl((\bar{\phi}\otimes \bar{\phi})\Delta_{LR(\X)}(T_2)\Bigr)\bigg)\\
& \hspace{4cm}  \text{(by Eq.~(\mref{eq:phivee}) and the induction hypothesis)}\\
&=(\bar{\phi}\otimes \bar{\phi})\Bigl((T_1\vee_\alpha T_2)\otimes |\Bigr)+(\ast_{H},\vee'_{\alpha})\bigg(\Bigl((\bar{\phi}\otimes \bar{\phi})\Delta_{LR(\X)}(T_1)\Bigr)\otimes \Bigl((\bar{\phi}\otimes \bar{\phi})\Delta_{LR(\X)}(T_2)\Bigr)\bigg)\\
&=(\bar{\phi}\otimes \bar{\phi})\Bigl((T_1\vee_\alpha T_2)\otimes |\Bigr)+(\ast_{H},\vee'_{\alpha}) \bigg( \sum_{(T_1),(T_2)}\Bigl(\bar{\phi}(T_{1(1)})\otimes \bar{\phi}(T_{1(2)})\Bigr)\otimes \Bigl(\bar{\phi}(T_{2(1)})\otimes \bar{\phi}(T_{2(2)})\Bigr) \bigg)\\
& \hspace{9cm}    \text{(by Eq.~(\mref{eq:dec.}))}\\
&=(\bar{\phi}\otimes \bar{\phi})\Bigl((T_1\vee_\alpha T_2)\otimes |\Bigr)+\sum_{(T_1),(T_2)}\Bigl(\bar{\phi}(T_{1(1)})\ast_{H} \bar{\phi}(T_{2(1)})\Bigr)\otimes \Bigl(\bar{\phi}(T_{1(2)})\vee'_{\alpha} \bar{\phi}(T_{2(2)})\Bigr)\\
&=(\bar{\phi}\otimes \bar{\phi})\Bigl((T_1\vee_\alpha T_2)\otimes |\Bigr)+\sum_{(T_1),(T_2)}\bar{\phi}(T_{1(1)}\ast T_{2(1)})\otimes \bar{\phi}(T_{1(2)}\vee_\alpha T_{2(2)})\\
& \hspace{4cm} \text{(by $\bar{\phi}$ being an algebra homomorphism and Eq.~(\mref{eq:phivee}))}\\
&=(\bar{\phi}\otimes \bar{\phi})\Bigl((T_1\vee_\alpha T_2)\otimes |\Bigr)+(\bar{\phi}\otimes \bar{\phi}) \bigg( \sum_{(T_1),(T_2)}(T_{1(1)}\ast T_{2(1)})\otimes (T_{1(2)}\vee_\alpha T_{2(2)}) \bigg)\\
&=(\bar{\phi}\otimes \bar{\phi})\Bigl((T_1\vee_\alpha T_2)\otimes |\Bigr)+ (\bar{\phi}\otimes \bar{\phi}) \circ (\ast, \vee_\alpha)\Bigl(\Delta_{LR(\X)}(T_1)\otimes \Delta_{LR(\X)}(T_2)\Bigr)\\
&=(\bar{\phi}\otimes \bar{\phi})\biggl((T_1\vee_\alpha T_2)\otimes |+(\ast, \vee_\alpha)\Bigl(\Delta_{LR(\X)}(T_1)\otimes \Delta_{LR(\X)}(T_2)\Bigr)\biggr)\\
&=(\bar{\phi}\otimes \bar{\phi}) \circ\Delta_{LR(\X)}(T_1\vee_\alpha T_2)\\
&=(\bar{\phi}\otimes \bar{\phi}) \circ \Delta_{LR(\X)}(T).
\end{align*}
}
We next prove the Eq.~(\mref{eq:phiv}). If $T=|$, then
\begin{align*}
\varepsilon_H \circ \bar{\phi}(T) = \varepsilon_H \circ \bar{\phi}(|)=\varepsilon_H(1_H)=1_{\mathbf{k}}=\varepsilon_{LR(\X)}(|).
\end{align*}
If $T\neq |$, then $T$ can be written as $T = T_1\vee_\alpha T_2$ for some $T_1, T_2\in Y_{\infty}(\X)$ and $\alpha\in \X$. By Eq.~(\mref{eq:phivee}),
\begin{align}
\varepsilon_H \circ \bar{\phi}(T)=\varepsilon_H \circ \bar{\phi}(T_1\vee_\alpha T_2)=\varepsilon_H \bigl(\bar{\phi}(T_1)\vee'_\alpha  \bar{\phi}(T_2)\bigr)= 0 =\varepsilon_{LR(\X)}(T),
\mlabel{eq:count}
\end{align}
where the second last step employs Proposition~\mref{pp:coideal}~(\mref{it:ida}).
This completes the proof of Item~(\mref{it:concludeb}).

(\mref{it:concludec}) By Lemma~\mref{lem:hopf1}, $(H_{LR}(\X), \ast, |, \Delta_{LR(\X)}, \varepsilon_{LR(\X)})$ is a Hopf algebra.
It is further a $\vee_\X$-Hopf algebra by Eq.~(\mref{eq:astttt}) and a cocycle $\vee_\X$-Hopf algebra by Eq.~(\mref{eq:dlr22}).
Then Item~(\mref{it:concludec}) follows from Item~(\mref{it:concludeb}) and the well-known fact that
any bialgebra morphism between two Hopf algebras is compatible with the antipodes~\cite[Lem.~4.04]{Swe69}.
\end{proof}

Taking $\X$ to be a singleton set in Theorem~\mref{thm:conclude}, all planar binary trees in $H_{LR}(\X)$ are decorated by the same letter.
In other words, planar binary trees in $H_{LR}(\X)$ have no decorations in this case and that are precisely the
planar binary trees in the classical Loday-Ronco Hopf algebra $H_{LR}$. So

\begin{coro}
\begin{enumerate}
\item The classical Loday-Ronco Hopf algebra $H_{LR}$ is the free cocycle $\vee$-bialgebra on the empty set,
that is, the initial object in the category of cocycle $\vee$-bialgebras.

\item The classical Loday-Ronco Hopf algebra $H_{LR}$ is the free cocycle $\vee$-Hopf algebra on the empty set,
that is, the initial object in the category of cocycle $\vee$-Hopf algebras.
\end{enumerate}
\mlabel{cor:clr}
\end{coro}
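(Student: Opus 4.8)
The plan is to deduce Corollary~\mref{cor:clr} directly from Theorem~\mref{thm:conclude} by specializing $\X$ to a singleton set $\{\bullet\}$. First I would observe that when $\X = \{\bullet\}$, every interior vertex of every tree in $Y_\infty(\X)$ carries the same (unique) decoration, so the decoration data is vacuous; hence the map $Y_\infty(\{\bullet\}) \to Y_\infty$ forgetting decorations is a bijection, and it identifies $H_{LR}(\{\bullet\})$ with the classical Loday-Ronco Hopf algebra $H_{LR}$ as a $\bfk$-module. Under this identification, the recursive formulas~(\mref{eq:astttt}) and~(\mref{eq:dlr22}) for $\ast$ and $\Delta_{LR(\X)}$ reduce to the classical product and coproduct on $H_{LR}$ described in~\cite[Thm.~3.1]{LR98}, and the single binary operation $\vee_\bullet$ becomes the classical grafting $\vee$ on planar binary trees. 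So the sextuple of Theorem~\mref{thm:conclude} becomes $(H_{LR}, \ast, |, \Delta_{LR}, \varepsilon_{LR}, \vee)$.

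Next I would note that, again since $\X$ is a singleton, the general notions of $\vee_\X$-bialgebra, cocycle $\vee_\X$-bialgebra, $\vee_\X$-Hopf algebra and cocycle $\vee_\X$-Hopf algebra from Definitions~\mref{defn:veealg},~\mref{defn:veehopf0} and~\mref{defn:veehopf} collapse to the corresponding notions of $\vee$-bialgebra, cocycle $\vee$-bialgebra, etc., with the subscript $\X$ suppressed as agreed immediately after Definition~\mref{defn:veehopf}. Consequently the category of cocycle $\vee_\X$-bialgebras is literally the category of cocycle $\vee$-bialgebras, and likewise for the Hopf versions. Thus the two universal properties asserted in Theorem~\mref{thm:conclude}~(\mref{it:concludeb}) and~(\mref{it:concludec}) for $\X = \{\bullet\}$ are exactly the statements of Corollary~\mref{cor:clr}~(1) and~(2): $H_{LR}$ is the initial object in the category of cocycle $\vee$-bialgebras (resp.~cocycle $\vee$-Hopf algebras), equivalently the free such structure on the empty set.

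There is essentially no hard step here: the entire content is the remark that the singleton decoration set carries no information, so that $H_{LR}(\{\bullet\}) \cong H_{LR}$ compatibly with all the structure maps. The only point requiring a word of care is to confirm that under this identification the product and coproduct of Lemma~\mref{lem:hopf1} and Eq.~(\mref{eq:dlr22}) genuinely match the original Loday--Ronco operations of~\cite{LR98}; this is already implicitly recorded just after Lemma~\mref{lem:hopf1}, where it is stated that a singleton $\X$ recovers ``the Loday-Ronco Hopf algebra on planar binary trees~\cite[Thm.~3.1]{LR98}''. Granting that, the corollary is immediate. I would write the proof as: ``Take $\X$ to be a singleton set in Theorem~\mref{thm:conclude} and use the identification of $H_{LR}(\X)$ with $H_{LR}$ together with the convention suppressing the subscript $\X$ when $\X$ is a singleton; the two items then follow at once from Theorem~\mref{thm:conclude}~(\mref{it:concludeb}) and~(\mref{it:concludec}) respectively.''
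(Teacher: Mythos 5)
Your proposal is correct and is exactly the paper's argument: the paper likewise deduces the corollary by taking $\X$ to be a singleton set in Theorem~\mref{thm:conclude}, noting that the decorations then carry no information so that $H_{LR}(\X)$ is identified with the classical $H_{LR}$. Your additional remarks on matching the structure maps just make explicit what the paper leaves implicit.
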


\begin{proof}
It follows from Theorem~\mref{thm:conclude} by taking $\X$ to be a singleton set.
\end{proof}

\medskip

\noindent {\bf Acknowledgments}: This work was supported by the National Natural Science Foundation
of China (Grant No.\@ 11771191 and 11501267), Fundamental Research Funds for the Central
Universities (Grant No.\@ lzujbky-2017-162), the Natural Science Foundation of Gansu Province (Grant
No.\@ 17JR5RA175).

\medskip

\noindent  We thank Prof. Foissy for helpful discussion and Proposition~\mref{pp:coideal} is inspired by the email communication with him.

\medskip

\end{document}